\numberwithin{equation}{section}
\newcommand{\R}{{\mathbb R}}
\newcommand{\Dis}{{\mathbb D}}
\newcommand{\Sf}{{\mathbb S}}
\newcommand{\eps}{\varepsilon}
\newcommand{\dom}[1]{{\rm dom}(#1)}
\newcommand{\ws}[1]{|d#1|}
\renewcommand{\theta}{\vartheta}
\numberwithin{equation}{section}
\newtheorem{theorem}{Theorem}[section]
\newtheorem{proposition}[theorem]{Proposition}
\newtheorem{lemma}[theorem]{Lemma}
\newtheorem{remark}[theorem]{Remark}
\newtheorem{corollary}[theorem]{Corollary}
\newtheorem{definition}[theorem]{Definition}
\theoremstyle{definition}
\title{On Struwe-Jeanjean-Toland monotonicity trick}
\author{Marco Squassina}
\address{Dipartimento di Informatica
\newline\indent
Universit\`a degli Studi di Verona
\newline\indent
C\'a Vignal 2, Strada Le Grazie 15, I-37134 Verona, Italy}
\email{marco.squassina@univr.it}
\thanks{Research supported by PRIN: {\em Metodi Variazionali e Topologici
nello Studio di Fenomeni non Lineari}}
\begin{document}
	
\date{\today}

\subjclass[2000]{74G65; 35J62; 35A15; 35B06; 58E05}

\keywords{Non-smooth critical point theory, monotonicity trick, Palais-Smale condition.}

\begin{abstract}
The abstract version of Struwe's monotonicity trick developed by Jeanjean and Jeanjean-Toland for functionals
depending on a real parameter is strengthened in the sense that it provides, for almost 
every value of the parameter, the existence of a bounded almost symmetric Palais-Smale sequence at the Mountain Pass level, 
whenever a mild symmetry assumption is set on the energy functional. Besides, all the machinery
is extended to the case of continuous functionals on Banach spaces, in the framework of non-smooth critical point theory.
\end{abstract}
\maketitle

\section{Introduction}
It is known that there are situations, often related to physically relevant PDEs associated with an energy functional $f$,
where it is particularly difficult to establish the {\em boundedness} of {\em Palais-Smale} sequences for $f$. 
In order to overcome this difficulty, {\sc Struwe} \cite{ambstruw,
struwe,struwe1,struwe2,struwem,struwtarant} introduced, around 1988, the
so-called {\em monotonicity trick}. In solving important problems he showed
how the fact that the underlying functional enjoys some monotonicity
properties could be used in order to derive a bounded Palais-Smale sequence. 
About ten years later, it was shown by {\sc Jeanjean} \cite{Jj} that it was possible to formulate a general abstract statement based
upon the monotonicity trick. This contribution is of particular relevance since it provides a ready-to-use machinery in order to tackle
variational PDE's for which the Palais-Smale condition is hard to manage. The principle says, essentially, that 
given a family of $C^1$ smooth functionals $f(\lambda;\cdot)$ satisfying a uniform Mountain Pass geometry and
monotonically depending on the parameter $\lambda$, then the almost everywhere
differentiability of the Mountain Pass value $c(\lambda)$ induces the existence of a bounded Palais-Smale sequence for $f(\lambda;\cdot)$ 
for almost every $\lambda$ in the interval $\Lambda$ where the family is defined. 
This property cannot be improved in general, in light of a counterexample due to {\sc Brezis-Nirenberg} (cf.~\cite{Jj}), which
shows that in some cases there may exist values of $\lambda$ for which {\em any} Palais-Smale sequence 
at the level $c(\lambda)$ is {\em unbounded}. 
Similar phenomena are know to occur in the study of periodic solutions
to Hamiltonian systems (cf.~\cite{ginz,herm}). We refer the reader to~\cite{Jj} for applications 
to a Landesman-Lazer type problem on $\R^N$, to~\cite{Jjbif} for a use in bifurcation analysis and, finally, 
to~\cite{Jtanak,wilzou} where the technique was used to investigate some classes of nonlinear Schr\"odinger equations. 
An important extension was done in {\sc Jeanjean-Toland} \cite{JT}, where it became clear that for the
monotonicity trick to hold true, actually, neither the monotonicity of the family 
$f(\lambda;\cdot)$ nor the differentiability of its related Mountain Pass value $c(\lambda)$ are needed. 
Although for the majority of concrete problems the dependence of the family $f(\lambda;\cdot)$ upon $\lambda$ is monotone,
in \cite{JT} some situations can be covered in the case where the family $f(\lambda;\cdot)$ has the form
$J(\lambda;u)-\lambda I(u)$, where $I,J:X\to\R$ are $C^1$ functionals with suitable structural assumptions.
The abstract results of~\cite{Jj} have also been extended e.g.~by {\sc Szulkin-Zou}, {\sc Zou-Schechter} and {\sc Schechter}
to other minimax structures with a nice impact on PDEs (see \cite{szuzou,schezou1,schezou2}, 
the monograph~\cite{zouschebook} and references therein).
\vskip2pt
The scope of the present manuscript is twofold. 
\vskip2pt
As a main goal, in Theorem~\ref{maintth1} and Corollaries~\ref{maintth1-cor}-\ref{maincor2}, 
we improve the abstract $(C^1)$ version of {\sc Jeanjean-Toland} \cite{JT} machinery 
in the sense that, up to a set of null measure, for each value of the parameter $\lambda$ we can find a bounded Palais-Smale
sequence $(u_h)\subset X$ for $f$ at the Mountain Pass level $c(\lambda)$ which is {\em almost symmetric}, in the sense that
\begin{equation}
	\label{quasisymmcond}
\|u_h-u_h^*\|_V\to 0,\quad\text{as $h\to\infty$,}
\end{equation}
where $V$ is a Banach space with $X\hookrightarrow V$ continuously, whenever a symmetry assumption,
satisfied for a wide range of concrete cases, is assumed on $f$.\ Such sequences will be called
$(SBPS)_{c(\lambda)}$-sequences (see Definition~\ref{defps3}). Here $u^*$ denotes an abstract symmetrization of $u$ (according to \cite{vansch}), 
for instance it can be the classical Schwarz symmetrization when we take $X=W^{1,p}_0(\Omega)$ for $\Omega$ either a ball in $\R^N$
or the whole $\R^N$. If in addition the functional
satisfies the {\em symmetric bounded Palais-Smale condition}, in short $(SBPS)_{c(\lambda)}$,
at the limit one finds a symmetric Mountain Pass 
critical point. We stress that, in various situations (like {\em noncompact problems}) 
showing that, for some level $c\in\R$, a functional satisfies $(SBPS)_{c}$ 
is possible and quite direct (cf.~\cite[proof of Theorem 4.5]{vansch}) while 
the Palais-Smale condition, in general, fails \cite[Theorem 8.4]{willembook}.
In fact, handling a $(SBPS)_{c}$ sequence allows to exploit the {\em compact} embeddings of
a spaces of symmetric functions into a suitable Banach space (see e.g.~\cite[Section 1.5]{willembook}). 
In some sense, as pointed out in \cite{vansch} as well, the additional information 
about the {\em almost symmetry} of the Palais-Smale sequence
provides an alternative to {\em concentration-compactness} \cite{lions1,lions2}.
Notice that it is meant that the energy functional is {\em not} apriori restricted to a space of symmetric functions
as usually done in applying the well known Palais' {\em symmetric criticality principle} \cite{palais-scp}, recently 
extended by the author \cite{sqradII} to a nonsmooth framework (see also~\cite{sqradI}).
\vskip2pt
As a second goal, we shall extend the monotonicity trick to the class of {\em continuous} functionals, in the framework
of non-smooth critical point theory. If $\Omega\subset\R^2$ is bounded, applications 
of the monotonicity trick (see \cite{meanfmontrick,struwtarant})
have been provided for the problem $-\Delta u=\lambda (\int_\Omega e^u)^{-1}e^u$
with Dirichlet boundary conditions, which is naturally associated with 
the $C^1$ functional $f(\lambda;\cdot):H^1_0(\Omega)\to\R$ defined by 
$$
f(\lambda;u)=\frac{1}{2}\int_\Omega |Du|^2-\lambda \log\Big(\frac{1}{|\Omega|}\int_\Omega e^u\Big).
$$
The previously mentioned equation can be also studied on a Riemannian manifold $(M,g)$, in which case the Laplace operator
is replaced by the Laplace-Beltrami operator $\Delta_g$. More generally, following some indications coming
from differential geometry \cite{uhlen}, one can think of equations on a manifold, associated with functional
having a kinetic part of of the form:
$$
\int_M j(x,s,Ds)d\mu_M(x),\quad    j(x,s(x),Ds(x))=G_{ij}(x,s(x))D_is(x)D_js(x),
$$
which, due to the explicit dependence upon $s(x)$ in the integrand, are non-smooth (not even locally Lipschitz).
In a similar fashion, in the context of diffusion processes such as heat-conduction, explicit dependence of the 
$s(x)$ in the kinetic part of the functional
has to be expected in the case of non-homogeneous and non-isotropic materials (cf.~\cite{ash,tritt}).
Therefore, it is reasonable to think that some situations can occur where the functional $f(\lambda;\cdot)$ under study
is of the form $J(\lambda;u)-\lambda I(u)$, where $J(\lambda;\cdot):X\to\R$ are
merely continuous (or even less regular) functionals while $I(\lambda;\cdot):X\to\R$ are $C^1$ functionals. 
In order to deal with this level of generality, we will use a suitable non-smooth critical point theory, now 
well-established, developed about twenty years ago (see e.g.~\cite{corv,cdm,dm}). 
\vskip4pt
The plan of the paper is as follows. 
\vskip4pt
In Section~\ref{preliminari}, we recall a few notions and
results from non-smooth critical point theory and symmetrization theory. In Section~\ref{risultati}
we state and comment the main result of the paper (Theorem~\ref{maintth1}) as well as two useful 
consequences (Corollaries~\ref{maintth1-cor} and \ref{maincor2}). Finally, in 
Section~\ref{prove}, we provide the proofs of the results.

\section{Some preliminary facts}
\label{preliminari}
In this section we recall abstract notions
and results from non-smooth critical point and symmetrization theories
that will be used in the proof of the main results.

\subsection{Tools from symmetrization theory}
We refer to~\cite{vansch} and references therein.

\subsubsection{Abstract symmetrization}
\label{abstractsymmetrrr}
Let $X$ and $V$ be two Banach spaces and $S\subseteq X$.
We consider two maps $*:S\to S$, $u\mapsto u^*$ 
({\em symmetrization map}) and $h:S\times {\mathcal H}_*\to S$,
$(u,H)\mapsto u^H$ ({\em polarization map}), where ${\mathcal H}_*$ 
is a path-connected topological space. We assume the following conditions:
\begin{enumerate}
 \item $X$ is continuously embedded in $V$;
 \item $h$ is a continuous mapping;
\item for each $u\in S$ and $H\in {\mathcal H}_*$ it holds $(u^*)^H=(u^H)^*=u^*$ and $u^{HH}=u^H$;
\item there exists $(H_m)\subset {\mathcal H}_*$ such that, for $u\in S$, $u^{H_1\cdots H_m}$ converges
to $u^*$ in $V$;
\item for every $u,v\in S$ and $H\in {\mathcal H}_*$ it holds
$\|u^H-v^H\|_V\leq \|u-v\|_V$.
\end{enumerate}
Furthermore $*:S\to V$ can be extended to the whole space $X$ by 
setting $u^*:=(\Theta(u))^*$ for all $u\in X$, where $\Theta:(X,\|\cdot\|_V)\to (S,\|\cdot\|_V)$ is
a Lipschitz function such that $\Theta|_{S}={\rm Id}|_{S}$. 
It is readily seen that, within this framework, there exists $C_\Theta>0$ such that 
\begin{equation}
	\label{misspropp} 
\|u^*-v^*\|_V\leq C_\Theta\|u-v\|_V,\quad\text{for all $u,v\in X$}.
\end{equation}

\subsubsection{Concrete polarization}
A subset $H$ of $\R^N$ is called a polarizer if it is a closed affine half-space
of $\R^N$, namely the set of points $x$ which satisfy $\alpha\cdot x\leq \beta$
for some $\alpha\in \R^N$ and $\beta\in\R$ with $|\alpha|=1$. Given $x$ in $\R^N$
and a polarizer $H$ the reflection of $x$ with respect to the boundary of $H$ is
denoted by $x_H$. The polarization of a function $u:\R^N\to\R^+$ by a polarizer $H$
is the function $u^H:\R^N\to\R^+$ defined by
\begin{equation}
 \label{polarizationdef}
u^H(x)=
\begin{cases}
 \max\{u(x),u(x_H)\}, & \text{if $x\in H$} \\
 \min\{u(x),u(x_H)\}, & \text{if $x\in \R^N\setminus H$.} \\
\end{cases}
\end{equation}
The polarization $C^H\subset\R^N$ of a set $C\subset\R^N$ is 
defined as the unique set which satisfies $\chi_{C^H}=(\chi_C)^H$,
where $\chi$ denotes the characteristic function. 
The polarization $u^H$ of a positive function $u$ defined on $C\subset \R^N$
is the restriction to $C^H$ of the polarization of the extension $\tilde u:\R^N\to\R^+$ of 
$u$ by zero outside $C$. The polarization of a function which may change sign is defined
by $u^H:=|u|^H$, for any given polarizer $H$.

\subsubsection{Concrete symmetrization}
\label{concresectsym}
The Schwarz symmetrization of a set $C\subset \R^N$ is the unique open ball 
centered at the origin $C^*$ such that ${\mathcal L}^N(C^*)={\mathcal L}^N(C)$, being ${\mathcal L}^N$ the
$N$-dimensional outer Lebesgue measure. If the measure of $C$ is zero we set $C^*=\emptyset$,
while if the measure of $C$ is not finite we put $C^*=\R^N$. 
A measurable function $u$ is admissible for the Schwarz symmetrization if it is nonnegative and, for every $\eps>0$,
the Lebesgue measure of $\{u>\eps\}$ is finite. The Schwarz symmetrization
of an admissible function $u:C\to\R^+$ is the unique function $u^*:C^*\to\R^+$ such that,
for all $t\in\R$, it holds $\{u^*>t\}=\{u>t\}^*$. Considering the extension 
$\tilde u:\R^N\to\R^+$ of $u$ by zero outside $C$, then
$u^*=(\tilde u)^*|_{C^*}$ and $(\tilde u)^*|_{\R^N\setminus C^*}=0$. 
The symmetrization for possibly changing sign $u$ can be the defined by $u^*:=|u|^*$. Let
${\mathcal H}_*=\{H\in{\mathcal H}: 0\in H\}$ and
$\Omega$ a ball in $\R^N$ or the whole space $\R^N$. Then let us set either 
$$
X=W^{1,p}_0(\Omega),\quad 
S=W^{1,p}_0(\Omega,\R^+),\quad
V=L^p\cap L^{p^*}(\Omega)
$$ 
or 
$$
X=S=W^{1,p}_0(\Omega),\quad
V=L^p\cap L^{p^*}(\Omega),\quad 
u^H:=|u|^H,\, u^*:=|u|^*.
$$
Then (1)-(5) in the abstract framework are satisfied (cf.~e.g.~\cite{vansch}).

\subsubsection{Symmetric approximation of curves}
In the proof of the main result, in order to overcome the lack (in general, cf.~\cite{almlieb}) of continuity
of the symmetrization map $u\mapsto u^*$, we shall need an approximation tool for continuous 
curves \cite[Proposition 3.1]{vansch}, that we state adapted to 
a particular framework. In the following, $\Dis$ and $\Sf$ will 
always denote the closed unit ball and sphere in $\R^m$ with $m\geq 1$.

\begin{proposition}
\label{approxxres}
Let $X$ and $V$ be two Banach spaces, $S\subseteq X$, $*$ and ${\mathcal H}_*$ 
which satisfy the requirements of the abstract symmetrization framework~\eqref{abstractsymmetrrr}. Let $M$ be 
a closed subset of $\Dis$, disjoint from $\Sf$, and $\gamma \in C(\Dis,X)$. Let $H_0\in {\mathcal H}_*$
and $\gamma(\Dis)\subset S$. Then, for every $\delta>0$, there exists a curve 
$\tilde\gamma\in C(\Dis,X)$ such that 
$$
\|\tilde\gamma(\tau)-\gamma(\tau)^*\|_V\leq \delta,\,\,\quad\text{for all $\tau\in M$},
$$
$\tilde\gamma (\tau)=\gamma(\tau)^{H_0H_1\cdots H_{[\theta]}H_\theta}$
for all $\tau\in \Dis$, with $\theta\geq 0$, $H_s\in {\mathcal H}_*$ for $s\geq 0$,
$\tilde\gamma(\tau)=\gamma(\tau)^{H_0}$ for all $\tau\in \Sf$. Here $[\theta]$ denotes
the largest integer less than or equal to $\theta$ and the polarizer $H_\theta$ is 
introduced in~\cite[Proposition 2.7]{vansch}.
\end{proposition}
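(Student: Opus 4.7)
The plan is to build $\tilde\gamma$ by a continuous one-parameter deformation that, at boundary points $\tau\in\Sf$, applies only the single polarizer $H_0$, and, at points of $M$, applies a long enough finite string of polarizers so that the iterate is within $\delta$ of $\gamma(\tau)^*$ in $V$. The parameter $\theta$, together with the auxiliary polarizer $H_\theta$ from \cite[Proposition 2.7]{vansch}, is precisely what glues these two regimes together continuously. First, I would establish a \emph{uniform} version of property~(4) on the compact set $K:=\{\gamma(\tau)^{H_0}:\tau\in M\}\subset V$ (compact because $M$ is compact in $\Dis$, $\gamma$ is continuous into $X\hookrightarrow V$, and polarization is continuous by~(2)). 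Given any $u_0\in K$, property~(4) provides polarizers $H^{u_0}_1,\ldots,H^{u_0}_{m(u_0)}$ with $\|u_0^{H^{u_0}_1\cdots H^{u_0}_{m(u_0)}}-u_0^*\|_V<\delta/2$. The nonexpansiveness~(5) of each polarization and the Lipschitz continuity~\eqref{misspropp} of $*$ imply this inequality persists, with margin $\delta$, on a $V$-neighborhood of $u_0$ of radius controlled by $\delta/(1+C_\Theta)$. A finite subcover of $K$ yields finitely many such polarizer strings; concatenating them into a single sequence $H_1,\ldots,H_m$ still works at every point of $K$, because $(u^*)^H=u^*$ by~(3) together with the nonexpansiveness~(5) guarantees that further polarizations cannot undo an approximation once achieved.

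Next I would construct a continuous function $\theta:\Dis\to[0,m]$ with $\theta\equiv 0$ on $\Sf$ and $\theta\equiv m$ on $M$, which is possible by Urysohn (or simply by $\theta(\tau)=m\,\mathrm{dist}(\tau,\Sf)/(\mathrm{dist}(\tau,\Sf)+\mathrm{dist}(\tau,M))$) since $M$ and $\Sf$ are disjoint closed subsets of $\Dis$. Setting
\[
\tilde\gamma(\tau):=\gamma(\tau)^{H_0H_1\cdots H_{[\theta(\tau)]}H_{\theta(\tau)}},
\]
continuity of $\tilde\gamma:\Dis\to X$ is ensured by continuity of $\gamma$, by~(2), by continuity of $\theta$, and by the continuous dependence of $H_\theta$ on $\theta$ provided by~\cite[Proposition 2.7]{vansch}. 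Possible integer jumps of $[\theta]$ are harmless because at $\theta=k\in\N$ the factor $H_{[\theta]}H_\theta$ collapses to $H_kH_k=H_k$ via $u^{HH}=u^H$ from~(3). On $\Sf$ one has $\theta\equiv 0$, so $\tilde\gamma(\tau)=\gamma(\tau)^{H_0}$; on $M$ one has $\theta\equiv m$, so $\tilde\gamma(\tau)=\gamma(\tau)^{H_0H_1\cdots H_m}$, and the uniform estimate from the previous paragraph (applied to $u=\gamma(\tau)^{H_0}$, using $(\gamma(\tau)^{H_0})^*=\gamma(\tau)^*$) gives $\|\tilde\gamma(\tau)-\gamma(\tau)^*\|_V\leq\delta$.

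The main obstacle is the first step: property~(4) is a pointwise convergence statement, and converting it into a \emph{uniform} approximation on the compact curve-segment $\gamma(M)^{H_0}$ is what really needs the structural ingredients (5), \eqref{misspropp} and (3) of the abstract framework. Once this uniform approximation is available, the interpolation in $\theta$ is essentially bookkeeping: the deformation machinery of \cite[Proposition 2.7]{vansch} is tailored to turn any fixed finite polarization string into a continuous path of polarizations, and the choice of $\theta$ enforces the prescribed behavior on $M$ and on $\Sf$ without introducing discontinuities.
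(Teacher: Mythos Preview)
The paper does not give its own proof of this proposition; it is quoted from Van Schaftingen~\cite[Proposition~3.1]{vansch}. Your outline follows that argument, and the second half---the Urysohn function $\theta$, the interpolated family $H_\theta$, and the handling of integer jumps via $u^{HH}=u^H$---is correct and is exactly how the continuous deformation is built there.

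There is, however, a real gap in your first step. You read property~(4) as producing, for each $u_0\in K$, its \emph{own} string $H_1^{u_0},\dots,H_{m(u_0)}^{u_0}$, and then propose to concatenate the strings coming from a finite subcover. But the estimate $\|u^{s_i}-u^*\|_V<\delta$ you obtain for $u$ in the $i$-th ball is valid only when $s_i$ is applied to $u$ directly. After concatenation $s=s_1\cdots s_N$, at such a point you first apply the foreign block $s_1\cdots s_{i-1}$; the resulting element $w=u^{s_1\cdots s_{i-1}}$ has no reason to lie in the $i$-th ball, so the estimate for $s_i$ does not apply to $w$. Your monotonicity remark (``further polarizations cannot undo an approximation once achieved'') is correct, but here the approximation has not yet been achieved when the foreign blocks act. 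The fix is already in the hypothesis: property~(4), as stated in the framework, gives a \emph{single} sequence $(H_m)\subset{\mathcal H}_*$ that works for every $u\in S$. With this reading, for each $u_0\in K$ choose $m(u_0)$ with $\|u_0^{H_1\cdots H_{m(u_0)}}-u_0^*\|_V<\delta/2$; by~(5) and~\eqref{misspropp} this persists on a $V$-ball of radius $\delta/(2(1+C_\Theta))$; cover $K$ finitely, set $m=\max_i m(u_i)$, and now your monotonicity observation applies legitimately (extending the \emph{same} sequence from $m(u_i)$ to $m$ cannot increase the distance to $u^*$). Equivalently, the continuous functions $u\mapsto\|u^{H_1\cdots H_m}-u^*\|_V$ decrease to $0$ on the compact set $K$, and Dini's theorem gives the uniform bound. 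With this correction your construction goes through.
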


\subsection{Tools from non-smooth critical point theory}
For definitions and notions in this section, we refer the reader to~\cite{cdm,dm} and the references therein.
In the following $(X,d)$ will denote a metric space and $B(u,\delta)$ the open ball in $X$ of center
$u$ and of radius $\delta$. 

\begin{definition}\label{defslope}
Let $f:X \to \R $ be a continuous function,
and $u\in X$. We denote by
$|df|(u)$ the supremum of the real numbers $ \sigma$ in
$[0,\infty)$ such that there exist $ \delta >0$
and a continuous map
$
{\mathcal H}\,:\,B(u, \delta) \times[ 0, \delta]  \to X,
$
such that, for every $v$ in $B(u,\delta) $, and for every
$t$ in $[0,\delta]$ it results
\begin{equation*}
d({\mathcal H}(v,t),v) \leq t,\qquad
f({\mathcal H}(v,t)) \leq f(v)-\sigma t.
\end{equation*}
The extended real number $|df|(u)$ is called the weak
slope of $f$ at $u$.
\end{definition}

We recall from~\cite{dm} a well known fact.

\begin{proposition}
	\label{lowersecslope}
	Let $f:X\to\R$ be a continuous functional. If $(u_h)\subset X$ is a sequence converging to $u$ in $X$, then
	$$
	|df|(u)\leq \liminf_h |df|(u_h).
	$$
\end{proposition}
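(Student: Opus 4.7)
The plan is to exploit the defining property of the weak slope rather directly: a deformation map witnessing a slope value $\sigma$ at $u$ also witnesses essentially the same value at any nearby point, provided we shrink the ball on which it is defined.

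First I would reduce to showing that for every real $\sigma \in [0,\infty)$ with $\sigma<|df|(u)$ one has $\sigma\le\liminf_h|df|(u_h)$. Taking the supremum over all such $\sigma$ then yields the claim. Fix such a $\sigma$. By Definition~\ref{defslope} there exist $\delta>0$ and a continuous map
$$
\mathcal{H}:B(u,\delta)\times[0,\delta]\to X,
$$
satisfying $d(\mathcal{H}(v,t),v)\le t$ and $f(\mathcal{H}(v,t))\le f(v)-\sigma t$ for all $(v,t)\in B(u,\delta)\times[0,\delta]$.

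Next I would use the convergence $u_h\to u$ to reuse $\mathcal{H}$ at $u_h$. Set $\delta_0=\delta/2$. For all $h$ large enough we have $d(u_h,u)<\delta_0$, whence by the triangle inequality $B(u_h,\delta_0)\subset B(u,\delta)$, and obviously $[0,\delta_0]\subset[0,\delta]$. Therefore the restriction
$$
\mathcal{H}\big|_{B(u_h,\delta_0)\times[0,\delta_0]}:B(u_h,\delta_0)\times[0,\delta_0]\to X
$$
is a continuous map still satisfying the two defining inequalities of Definition~\ref{defslope} with the same constant $\sigma$. This shows $|df|(u_h)\ge\sigma$ for every sufficiently large $h$, hence $\liminf_h|df|(u_h)\ge\sigma$, as required.

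There is essentially no hard step here: the argument is just a direct unwinding of the definition, and the only subtle point is to verify that $B(u_h,\delta/2)\subset B(u,\delta)$ eventually, which is immediate from $u_h\to u$. I would not need to invoke any further property of $f$ beyond its continuity (already built into the definition of $|df|$), nor any special structure of $X$ beyond being a metric space.
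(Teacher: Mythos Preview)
Your argument is correct and is precisely the standard proof of this lower semicontinuity property of the weak slope: the deformation $\mathcal{H}$ witnessing a given $\sigma<|df|(u)$ on $B(u,\delta)\times[0,\delta]$ restricts to a valid witness on $B(u_h,\delta/2)\times[0,\delta/2]$ once $d(u_h,u)<\delta/2$. The paper itself does not supply a proof of this proposition; it simply recalls the statement from~\cite{dm} as a well-known fact, so there is nothing further to compare.
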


The next result establishes the connection between the weak
slope of a function $f$ and its differential $df(u)$, in the case where $f$ is 
of class $C^1$, see~\cite[Corollary 2.12]{dm}.

\begin{proposition}
If $X$ is an open subset of a normed space $E$ and $f$ is a function of class $C^1$
on $X$, then $|df|(u)=\|df(u)\|$ for every $u\in X$.
\end{proposition}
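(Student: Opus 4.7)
The plan is to establish the two inequalities $|df|(u) \leq \|df(u)\|$ and $|df|(u) \geq \|df(u)\|$ separately, both relying on a first-order Taylor expansion of $f$ in the normed space $E$.

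For the bound $|df|(u) \geq \|df(u)\|$, I would fix an arbitrary $\sigma \in (0,\|df(u)\|)$ and, by definition of the dual norm, select a unit vector $w \in E$ such that $\langle df(u), w\rangle > \sigma$. Continuity of $df$ together with openness of $X$ in $E$ provide a radius $\delta>0$ for which $\langle df(v), w\rangle > \sigma$ for every $v\in B(u,\delta)$ and $v-tw\in X$ for every $(v,t)\in B(u,\delta)\times[0,\delta]$. The natural candidate deformation is then
$$
\mathcal{H}(v,t) := v - tw,
$$
which is jointly continuous and satisfies $d(\mathcal{H}(v,t),v)=t$. The fundamental theorem of calculus applied to $s\mapsto f(v-sw)$ gives
$$
f(v-tw)-f(v) = -\int_0^t \langle df(v-sw), w\rangle\,ds \leq -\sigma t,
$$
so $\sigma \leq |df|(u)$, and letting $\sigma\uparrow \|df(u)\|$ yields the claim.

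For the reverse inequality, I would fix $\sigma\in(0,|df|(u))$ and pick $\delta>0$ together with a deformation $\mathcal{H}:B(u,\delta)\times[0,\delta]\to X$ as in Definition~\ref{defslope}. Setting $\phi(t):=\mathcal{H}(u,t)-u$, the constraint $d(\mathcal{H}(v,t),v)\leq t$ at $t=0$ forces $\phi(0)=0$, while $\|\phi(t)\|\leq t$ holds for all $t\in[0,\delta]$. The $C^1$ regularity of $f$ then yields
$$
f(u+\phi(t)) = f(u) + \langle df(u),\phi(t)\rangle + o(\|\phi(t)\|) = f(u) + \langle df(u),\phi(t)\rangle + o(t),
$$
so the slope inequality $f(\mathcal{H}(u,t))-f(u)\leq -\sigma t$ translates into
$$
\sigma t \leq -\langle df(u),\phi(t)\rangle + o(t) \leq \|df(u)\|\,\|\phi(t)\| + o(t) \leq \|df(u)\|\,t + o(t).
$$
Dividing by $t>0$ and letting $t\to 0^+$ gives $\sigma \leq \|df(u)\|$, whence $|df|(u)\leq \|df(u)\|$ by letting $\sigma\uparrow |df|(u)$.

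The only mild technical point to watch is ensuring that the linear deformation $v\mapsto v-tw$ actually stays inside the open set $X$ over the whole box $B(u,\delta)\times[0,\delta]$; this is handled by taking $\delta$ small, exploiting that $X$ is open in $E$ and that $\|w\|=1$. The remaining content is classical first-order analysis for $C^1$ functionals, and no deeper tools from non-smooth critical point theory enter.
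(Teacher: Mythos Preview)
Your argument is correct. The paper does not actually supply a proof of this proposition; it merely cites \cite[Corollary~2.12]{dm} and states the result. Your two-step approach---building the linear deformation $\mathcal{H}(v,t)=v-tw$ along a near-optimal direction to get $|df|(u)\geq\|df(u)\|$, and then reading off $|df|(u)\leq\|df(u)\|$ from the first-order Taylor expansion along $\phi(t)=\mathcal{H}(u,t)-u$---is the standard proof found in the non-smooth critical point theory literature (including the cited reference), and all the details you flag (staying inside $X$, the $o(t)$ control via $\|\phi(t)\|\leq t$) are handled correctly.
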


We recall from \cite{corv} the following Quantitative Deformation Lemma (cf.~\cite[Theorem 2.3]{corv})

\begin{lemma}
\label{theorlem}
Assume that $X$ is a complete metric space and $f:X\to\R$ is a continuous functional,
$c\in\R$, $A$ is a closed subset of $X$ and $\delta,\sigma>0$ are such that
$$
\text{$c-2\delta\leq f(u)\leq c+2\delta$\, and \, $d(u,A)\leq\frac{\delta}{\sigma}$}\quad
\Longrightarrow\quad |df|(u)>2\sigma.
$$
Then there exists a continuous map $\eta:X\times[0,1]\to X$ such that  
$$ 
d(\eta(u,t),u)\leq \frac{\delta}{\sigma}t,\qquad
\eta(u,t)\neq u \,\,\,\Longrightarrow\,\,\,  f(\eta(u,t))<f(u),
$$ 
and
$$  
u\in A,\,\,\, c-\delta\leq f(u)\leq c+\delta\quad\Longrightarrow\quad
f(\eta(u,t))\leq f(u)-(f(u)-c+\delta) t,
$$
for every $u\in X$ and $t\in[0,1]$.
\end{lemma}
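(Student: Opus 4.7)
The plan is to build $\eta$ by stitching together local $f$-descending pseudo-flows furnished by the weak slope, activating the motion only near the ``bad set''
$$
B := \{u\in X : c-2\delta \le f(u) \le c+2\delta,\ d(u,A)\le \delta/\sigma\}.
$$
Since $f$ and $d(\cdot,A)$ are continuous, $B$ is closed; by hypothesis $|df|(u) > 2\sigma$ on $B$, and by Proposition~\ref{lowersecslope} the set $U := \{u\in X : |df|(u) > 2\sigma\}$ is open and contains $B$. Outside a neighbourhood of $B$ the flow $\eta$ will be the identity; inside, it will decrease $f$ at a rate comparable to $2\sigma$.

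First, for each $u\in U$ use Definition~\ref{defslope} to pick $r_u\in(0,\delta/\sigma)$ and a continuous map $\mathcal H_u : B(u,r_u)\times[0,r_u]\to X$ with $d(\mathcal H_u(v,s),v)\le s$ and $f(\mathcal H_u(v,s))\le f(v)-2\sigma s$ for all $(v,s)\in B(u,r_u)\times[0,r_u]$. Since every metric space is paracompact, the open cover $\{B(u,r_u/2):u\in U\}\cup\{X\setminus B\}$ admits a locally finite refinement $\{V_i\}_{i\in I}\cup\{V_0\}$, with $V_0\subset X\setminus B$, together with a subordinate continuous partition of unity $\{\psi_i\}_{i\in I}\cup\{\psi_0\}$. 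For $i\in I$ fix $u_i\in U$ with $V_i\subset B(u_i,r_{u_i}/2)$ and abbreviate $\mathcal H_i := \mathcal H_{u_i}$.

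Introduce the auxiliary cutoff
$$
\theta(u) := \min\!\left\{1,\ \tfrac{1}{2\delta}\max\{0,\,f(u)-c+\delta\}\right\},
$$
so that $\theta=1$ on $\{f\ge c+\delta\}$, $\theta=0$ on $\{f\le c-\delta\}$, and $0\le\theta\le 1$ everywhere. The plan is to construct $\eta(u,t)$ as the uniform-on-compacts limit of the $n$-step concatenations
$$
\eta_n(u,t) := T_n\circ\cdots\circ T_1(u),
$$
where each operator $T_k$ starts from the current base point $v$ and composes, in an arbitrary but fixed order, the finitely many local maps $\mathcal H_i(\cdot,s_i)$ with step lengths $s_i := \frac{\delta}{\sigma n}\,\theta(v)\,\psi_i(v)\,t$. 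Because $\sum_{i\in I}\psi_i = 1-\psi_0 \le 1$, each $T_k$ produces a displacement bounded by $(\delta/\sigma n)\theta(v)t$, and the slope estimate decreases $f$ by at least $2\delta\theta(v)(1-\psi_0(v))t/n$; summing over $k$ and passing to the limit yields displacement $\le (\delta/\sigma)t$ and $f$-decrease $\ge 2\delta\,\theta(u)(1-\psi_0(u))t$ along the limit trajectory. Local finiteness of $\{V_i\}$, continuity of each $\mathcal H_i$ and completeness of $X$ provide the limit $\eta$ and its joint continuity in $(u,t)$.

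The stated properties now follow. The displacement bound is built in. If $\eta(u,t)\ne u$ then at least one substep was nontrivial and strictly decreased $f$, giving $f(\eta(u,t))<f(u)$. If $u\in A$ and $c-\delta\le f(u)\le c+\delta$ then $u\in B$, so $\psi_0(u)=0$ and $\theta(u)=(f(u)-c+\delta)/(2\delta)$, and the decrease becomes exactly $(f(u)-c+\delta)t$ along the initial portion of the trajectory; for larger $t$ one observes that either the trajectory is still ``active'' (and the same rate applies) or $f$ has already dropped below $c-\delta$, in which case $f(\eta(u,t))\le c-\delta\le f(u)-(f(u)-c+\delta)t$ since $t\le 1$. \textbf{The main obstacle} is this gluing step: unlike the smooth setting where a pseudo-gradient vector field can be integrated, here one must simulate integration by many small compositions of the $\mathcal H_i$, and the continuity of $\eta$ rests delicately on the joint continuity of each $\mathcal H_i$ and the local finiteness of the cover.
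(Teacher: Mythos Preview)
The paper does not prove this lemma; it is recalled verbatim from Corvellec \cite[Theorem~2.3]{corv} as a black box.  So there is no ``paper's proof'' to compare against, and what you have written is an independent attempt at the underlying deformation argument.

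Your overall strategy---cover the set where $|df|>2\sigma$ by small balls on which local descent maps $\mathcal H_i$ exist, glue them with a partition of unity, and iterate in small steps---is indeed the standard non-smooth substitute for integrating a pseudo-gradient field, and is essentially how the result is obtained in \cite{corv,cdm}.  However, your specific construction does \emph{not} deliver the third, quantitative conclusion of the lemma, and the reason is the cutoff $\theta$.

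You evaluate $\theta$ at the \emph{current} basepoint $v$, so that on the relevant set $\theta(v)=(f(v)-c+\delta)/(2\delta)$.  The per-step decrease is then $(t/n)(f(v_k)-c+\delta)$, which gives the recursion
\[
f(v_{k+1})-(c-\delta)\ \le\ \Bigl(1-\tfrac{t}{n}\Bigr)\bigl(f(v_k)-(c-\delta)\bigr),
\]
and hence, in the limit, $f(\eta(u,t))-(c-\delta)\le e^{-t}\bigl(f(u)-(c-\delta)\bigr)$.  Since $e^{-t}>1-t$ for every $t\in(0,1]$, this is strictly weaker than the required linear bound $f(\eta(u,t))\le (c-\delta)+(1-t)(f(u)-(c-\delta))$; in particular at $t=1$ you do not reach the level $c-\delta$.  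Your sentence ``either the trajectory is still active (and the same rate applies)'' is exactly where this goes wrong: the rate is tied to $\theta$ at the moving point and decays along the flow.  The usual fix is to let the deformation run at full speed (no $\theta$) as long as the trajectory remains in $B$---which it does, because the displacement bound keeps it within $\delta/\sigma$ of $A$ and $f$ stays in $[c-2\delta,c+2\delta]$ until the goal is overshot---and to use a cutoff only to match the identity outside a neighbourhood of $B$.

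There are also secondary gaps you should be aware of: the composition of several $\mathcal H_i$ within a single $T_k$ requires that the intermediate points remain in the domains $B(u_i,r_{u_i})$, which needs an $n$ adapted to the local radii; and the existence and joint continuity of the limit $\eta$ are asserted, not proved.  These are precisely the ``delicate'' points you flag at the end, and in the literature they are handled with some care.
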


The previous notions allow us to give the next definition.
\begin{definition}
We say that $u\in\dom{f}$ is a critical
point of $f$ if $\ws{f}(u)=0$.
We say that $c\in\R$ is a critical value of
$f$ if there is a critical point $u\in\dom{f}$
of $f$ with $f(u)=c$. 
\end{definition}

Finally, we consider a useful notion of (almost) symmetry for Palais-Smale sequences. 

\begin{definition}\label{defps3}
Let $(X,\|\cdot\|)$ and $(V,\|\cdot\|_V)$ be Banach spaces which are compatible 
with the abstract symmetrization framework~\ref{abstractsymmetrrr}.
We say that $(u_n)\subset X$ is a {\rm Symmetric Bounded Palais-Smale sequence} at level $c\in\R$ ($(SBPS)_c$-sequence) if $(u_n)$ is bounded in $X$,
$\ws{f}(u_n) \to 0$, $f(u_n) \to c$ and, in addition, 
$$
\lim_{n\to\infty}\|u_n-u_n^*\|_V=0.
$$ 
We say that $f$ satisfies the {\rm Symmetric Bounded Palais-Smale condition} at level $c$ ($(SBPS)_c$
in short), if every $(SBPS)_c$ sequence admits a subsequence converging in $X$.
\end{definition}

\section{The results}
\label{risultati}
In this section we state and prove the main results of the paper.

\subsection{Assumptions}
Let $(X,\|\cdot\|)$ and $(V,\|\cdot\|_V)$ be two real Banach spaces, $S\subseteq X$, $*$ and ${\mathcal H}_*$ 
which satisfy the requirements of the abstract symmetrization framework~\eqref{abstractsymmetrrr}. We consider the 
following assumptions:
\noindent
\vskip5pt
\noindent
$\boldsymbol{({\mathcal H}_1)}$ Let $\Lambda\subset\R$ be a compact interval and 
$$
f:\Lambda\times X\to\R,
$$
a family of functionals such that, for all $\lambda\in \Lambda$, $f(\lambda;\cdot)$ is continuous. 
\noindent
\vskip4pt
\noindent
$\boldsymbol{({\mathcal H}_2)}$ If $\Gamma_0\subset C(\Sf;X)$, then for all $\lambda\in \Lambda$:
$$
c(\lambda)>a(\lambda),\qquad a(\lambda):=\sup_{\gamma_0\in\Gamma_0}\sup_{\tau\in\Sf}f(\lambda;\gamma_0(\tau)),
$$
where $c(\lambda)$ is the Mountain Pass values defined by
\begin{equation}
\label{mountainpasdef}
c(\lambda):=\inf_{\gamma\in \Gamma}\sup_{t\in \Dis} f(\lambda;\gamma(t)),\qquad
\Gamma:=\{\gamma\in C(\Dis,X):\gamma|_{\Sf}\in\Gamma_0\}\quad (\Gamma\neq\emptyset).
\end{equation}
\noindent
\vskip4pt
\noindent
$\boldsymbol{({\mathcal H}_3)}$ For every sequence $(\lambda_h,u_h)\subset \Lambda\times X$ 
with $(\lambda_h)$ strictly increasing and converging to $\lambda$ 
for which there exists $C\in\R$ with
$$
f(\lambda_h;u_h)\leq C, \,\,\quad
-f(\lambda;u_h)\leq C, \,\,\quad
\frac{f(\lambda_h;u_h)-f(\lambda;u_h)}{\lambda-\lambda_h}\leq C,\qquad
\text{for all $h\geq 1$},
$$
then $\|u_h\|\leq {\mathcal M}$ for some number ${\mathcal M}={\mathcal M}(C)\geq 0$ and all $h\geq 1$ and, for every $\eps>0$ 
$$
f(\lambda,u_h)\leq f(\lambda_h;u_h)+\eps,  \qquad
\text{for all $h\geq 1$ sufficiently large}.
$$
\noindent
\vskip4pt
\noindent
$\boldsymbol{({\mathcal H}_4)}$ For all $\gamma\in\Gamma$
there are $\hat\gamma\in\Gamma$ and $H_0\in {\mathcal H}_*$ with 
$\hat\gamma(\Dis)\subset S$ and $\hat\gamma|_{\Sf}^{H_0} \in \Gamma_0$ such that
$$
f(\lambda;\hat\gamma(t))\leq f(\lambda;\gamma(t)),\qquad
\text{for all $t\in \Dis$ and $\lambda\in\Lambda$.}
$$
Moreover, for all $\lambda\in\Lambda$,
$$
f(\lambda;u^H)\leq f(\lambda;u),\qquad
\text{for all $H\in {\mathcal H}_*$ and $u\in S$.}
$$
\subsubsection{Some remarks on the assumptions}
Concerning $\boldsymbol{({\mathcal H}_2)}$, it is a uniform Mountain Pass geometry for the family
of functions $\{f(\lambda;\cdot)\}_{\lambda\in\Lambda}$. In the minimax principle one could also allow a more general situation
where $\Gamma=\Gamma(\lambda)$ depends on $\lambda$. On the other hand, in this case one needs some monotonicity property on $\Gamma(\lambda)$, 
for instance $\Gamma(\lambda)\subseteq\Gamma(\mu)$, for every $\lambda<\mu$. One can think for instance to the two important (classical) cases:
\begin{align}
	\label{classGamma}
\Gamma&=\{\gamma\in C([0,1];X):\gamma(0)=0,\,\, \gamma(1)=v\},
\quad\text{with $f(\lambda;v)<0$, for all $\lambda\in\Lambda$}, \\
\label{classGamma-bis}
\Gamma(\lambda) &=\{\gamma\in C([0,1];X):\gamma(0)=0,\,\, f(\lambda;\gamma(1))<0\},
\end{align}
corresponding in $\boldsymbol{({\mathcal H}_2)}$ to the choice $\Dis=[0,1]$, $\Sf=\{0,1\}$ and $\Gamma_0=\{0,v\}$.
Assuming that the map $\lambda\mapsto f(\lambda;\cdot)$ is decreasing, then $\lambda\mapsto \Gamma(\lambda)$ is increasing. 
The choice of~\eqref{classGamma} for the construction of $c(\lambda)$
is probably the most classical and widely used, and it is precisely the minimaxing
family of curves used in~\cite{Jj,JT}.
Concerning condition $\boldsymbol{({\mathcal H}_3)}$, it is precisely 
the one originally formulated by Jeanjean and Toland \cite{JT} and it aims to select
a particular sequence $(\gamma_n)$ of curves in $\Gamma$, which enjoy some good properties. As pointed out in \cite[Example 2.1]{JT},
functionals of the form $f(\lambda;u)=A(\lambda;u)-\lambda B(u)$ satisfy $\boldsymbol{({\mathcal H}_3)}$, under suitable assumptions.
If in addition $A$ is independent of $\lambda$ the last property in $\boldsymbol{({\mathcal H}_3)}$ automatically holds and the boundedness
of $(u_h)$ follows by the coerciveness of either $A(u)$ or $B(u)$ (cf.~\cite{Jj}).
Finally, compared with~\cite{JT}, $\boldsymbol{({\mathcal H}_4)}$ is the new additional assumption and it constitutes the natural
link with symmetrization theory. We stress that it is fulfilled in a broad range of 
meaningful cases (see \cite{vansch,sqradI}). 
In the Sobolev case $S=W^{1,p}_0(\Omega,\R^+)\subset W^{1,p}_0(\Omega)=X$ (cf.\ sections~\ref{abstractsymmetrrr}-\ref{concresectsym}), 
choosing the family \eqref{classGamma} one uses
a function $v\geq 0$ with $v^{H_0}=v$ and $f(\lambda;v)<0$ for some $H_0\in {\mathcal H}_*$ and all $\lambda\in\Lambda$. Hence, 
if $\gamma\in\Gamma$ and $\hat\gamma(t):=|\gamma(t)|\in S$, it follows that
$f(\lambda;\hat\gamma(t))\leq f(\lambda;\gamma(t))$ for all $t\in [0,1]$ and $\lambda\in\Lambda$
if for instance $f(\lambda,|\cdot|)\leq f(\lambda,\cdot)$. Moreover,
$\hat\gamma(0)^{H_0}=0\in\Gamma_0$ and $\hat\gamma(1)^{H_0}=v^{H_0}=v\in\Gamma_0$. Choosing instead
the family \eqref{classGamma-bis}, if we fix some $H_0\in {\mathcal H}_*$, we have 
$f(\lambda,\hat\gamma(1)^{H_0})=f(\lambda,|\gamma(1)|^{H_0})\leq f(\lambda,|\gamma(1)|)\leq f(\lambda,\gamma(1))<0$,
so that again $\hat\gamma(0)^{H_0},\hat\gamma(1)^{H_0}\in\Gamma_0$, as requested by the first part of $\boldsymbol{({\mathcal H}_4)}$.
Similar choices are made in the case where one takes $S=X=W^{1,p}_0(\Omega)$ (cf.\ sections~\ref{abstractsymmetrrr}-\ref{concresectsym}).

\subsection{Statements}
Under $\boldsymbol{({\mathcal H}_1)}$-$\boldsymbol{({\mathcal H}_4)}$, we now state the main result of the paper.

\begin{theorem}
	\label{maintth1}
For almost every $\lambda\in \Lambda$, $f(\lambda;\cdot)$ possesses a $(SBPS)_{c(\lambda)}$-sequence. 
\end{theorem}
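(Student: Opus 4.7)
The strategy is to adapt the classical \textsc{Jeanjean-Toland} monotonicity argument by replacing the standard minimaxing curves with curves whose image is close, in the $V$-norm, to a symmetrized image, using the polarization-to-symmetrization approximation furnished by Proposition~\ref{approxxres}. As a preliminary reduction, following \cite[Theorem 1.1]{JT}, under $\boldsymbol{({\mathcal H}_1)}$--$\boldsymbol{({\mathcal H}_3)}$ one first checks that $\lambda\mapsto c(\lambda)$ is of locally bounded variation on $\Lambda$, hence differentiable outside a negligible set $N\subset\Lambda$. I would then fix $\lambda\in\Lambda\setminus N$ and a strictly increasing sequence $\lambda_h\nearrow\lambda$.

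For each $h$ I select $\gamma_h\in\Gamma$ with $\sup_{t\in\Dis}f(\lambda_h;\gamma_h(t))\leq c(\lambda_h)+(\lambda-\lambda_h)$ and, invoking $\boldsymbol{({\mathcal H}_4)}$, arrange that $\gamma_h(\Dis)\subset S$ and $\gamma_h|_{\Sf}^{H_0}\in\Gamma_0$ without increasing $f(\lambda;\cdot)$ along the curve. Since $f(\lambda;\gamma_h)\leq a(\lambda)<c(\lambda)$ on $\Sf$, the continuity of $\gamma_h$ provides some $\eps_h\in(0,1)$ so that, letting
$$
M_h:=\{\tau\in\Dis:\mathrm{dist}(\tau,\Sf)\geq\eps_h\},
$$
the near-maximum of $f(\lambda;\gamma_h(\cdot))$ lies inside $M_h$ while on $\Dis\setminus M_h$ its values stay strictly below $c(\lambda)$. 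Applying Proposition~\ref{approxxres} with $M=M_h$, with the polarizer $H_0$ from $\boldsymbol{({\mathcal H}_4)}$ and with $\delta=1/h$, I obtain a curve $\tilde\gamma_h\in C(\Dis,X)$ of the form $\tilde\gamma_h(\tau)=\gamma_h(\tau)^{H_0H_1\cdots H_{[\theta_h]}H_{\theta_h}}$ with $\tilde\gamma_h|_{\Sf}=\gamma_h|_{\Sf}^{H_0}\in\Gamma_0$, so $\tilde\gamma_h\in\Gamma$, and such that $\|\tilde\gamma_h(\tau)-\gamma_h(\tau)^*\|_V\leq 1/h$ for every $\tau\in M_h$. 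Iterating the second inequality in $\boldsymbol{({\mathcal H}_4)}$ gives $f(\lambda;\tilde\gamma_h(\tau))\leq f(\lambda;\gamma_h(\tau))$ for all $\lambda\in\Lambda$ and $\tau\in\Dis$.

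The crucial step is the extraction of an almost-symmetric nearly-critical point on each $\tilde\gamma_h$. I argue by contradiction: if $f(\lambda;\cdot)$ admits no $(SBPS)_{c(\lambda)}$-sequence then, combined with the a priori bound ${\mathcal M}$ furnished by $\boldsymbol{({\mathcal H}_3)}$, there exist $\rho,\sigma>0$ such that every $u\in X$ with $\|u\|\leq {\mathcal M}+1$, $|f(\lambda;u)-c(\lambda)|\leq 2\rho$ and $\|u-u^*\|_V\leq 2\rho$ satisfies $\ws{f(\lambda;\cdot)}(u)>2\sigma$. Using the differentiability of $c$ at $\lambda$, for $h$ large one has $c(\lambda_h)+(\lambda-\lambda_h)\leq c(\lambda)+\rho$, while $1/h<\rho$ and the near-maximum of $f(\lambda;\tilde\gamma_h)$ is confined to $M_h$ by construction. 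Setting $A_h:=\tilde\gamma_h(M_h)\cap\{|f(\lambda;\cdot)-c(\lambda)|\leq\rho\}$, Lemma~\ref{theorlem} produces a deformation $\eta$ that strictly decreases $f(\lambda;\cdot)$ on $A_h$ and fixes points outside the tubular region (in particular $\tilde\gamma_h|_{\Sf}$ is untouched). Composing $\eta(\cdot,1)$ with $\tilde\gamma_h$ yields a curve still in $\Gamma$ whose maximum drops strictly below $c(\lambda)$, contradicting~\eqref{mountainpasdef}.

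The contradiction yields $\tau_h\in M_h$ such that $u_h:=\tilde\gamma_h(\tau_h)$ satisfies $f(\lambda;u_h)\to c(\lambda)$ and $\ws{f(\lambda;\cdot)}(u_h)\to 0$. Since $f(\lambda_h;u_h)\leq c(\lambda_h)+(\lambda-\lambda_h)$ and the difference quotient $(f(\lambda_h;u_h)-f(\lambda;u_h))/(\lambda-\lambda_h)$ is controlled by the differentiability of $c$ at $\lambda$, assumption $\boldsymbol{({\mathcal H}_3)}$ forces boundedness of $(u_h)$ in $X$ and confirms $f(\lambda;u_h)\to c(\lambda)$. Finally, by property (3) of the abstract symmetrization framework applied iteratively, $u_h^*=\gamma_h(\tau_h)^*$, whence
$$
\|u_h-u_h^*\|_V=\|\tilde\gamma_h(\tau_h)-\gamma_h(\tau_h)^*\|_V\leq 1/h\to 0,
$$
which identifies $(u_h)$ as the desired $(SBPS)_{c(\lambda)}$-sequence. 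The main obstacle is the third paragraph: the parameters $\rho,\sigma,\eps_h,1/h$ must be calibrated so that the deformation lemma can actually be applied inside the $V$-tubular neighborhood of the symmetric set, the deformed curve still belongs to $\Gamma$, and the induced strict decrease genuinely contradicts the near-minimality arising from the differentiability of $c$.
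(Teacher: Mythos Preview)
Your overall architecture---replace the minimaxing curve by an iterated-polarization approximant via Proposition~\ref{approxxres}, then run a quantitative deformation argument---is exactly the paper's. The main defect is in your fourth paragraph: the sentence ``the contradiction yields $\tau_h\in M_h$ such that $u_h:=\tilde\gamma_h(\tau_h)$ \ldots'' does not follow from what precedes it. You set up a \emph{global} contradiction (``assume no $(SBPS)_{c(\lambda)}$-sequence, obtain uniform $\rho,\sigma$, deform, contradict''): its negation is simply ``an $(SBPS)_{c(\lambda)}$-sequence exists'', with no information on where the sequence lives. If instead you intend a per-$h$ contradiction on the hypothesis of Lemma~\ref{theorlem}, then the failure of that hypothesis produces a point $u_h$ with $d(u_h,A_h)\leq\delta/\sigma$ and small slope, i.e.\ a point \emph{near} $A_h$ in the $X$-metric, not a point \emph{on} the curve. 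Either way, the identification $u_h=\tilde\gamma_h(\tau_h)$ is unjustified, and with it both your boundedness argument (which uses $f(\lambda_h;u_h)\leq c(\lambda_h)+(\lambda-\lambda_h)$, valid only on the curve) and your final equality $\|u_h-u_h^*\|_V=\|\tilde\gamma_h(\tau_h)-\gamma_h(\tau_h)^*\|_V$.

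The paper avoids this by \emph{not} encoding near-symmetry in the contradiction hypothesis. For each small $\delta$ it sets $A_\delta:=\{\,u\in\tilde\gamma_{h_\delta}(\Dis):\|u\|\leq{\mathcal M},\ |f(\lambda;u)-c(\lambda)|\leq 2\delta\,\}$, takes $C_\delta$ to be its $\sqrt\delta$-tube in $X$, and assumes $|df(\lambda;\cdot)|>2\sqrt\delta$ on $C_\delta$; the deformation then contradicts the minimax directly, with no calibration against $\|u-u^*\|_V$. The resulting $u_\delta\in C_\delta$ is automatically bounded (within $\sqrt\delta$ of the ball of radius ${\mathcal M}$) and, crucially, one checks $A_\delta\subset\tilde\gamma_{h_\delta}(M_{h_\delta})$ so that $d\big(u_\delta,\tilde\gamma_{h_\delta}(M_{h_\delta})\big)\leq\sqrt\delta$. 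The near-symmetry is then recovered \emph{a posteriori}: picking $\tau\in M_{h_\delta}$ with $\|u_\delta-\tilde\gamma_{h_\delta}(\tau)\|$ small and using $\tilde\gamma_{h_\delta}(\tau)^*=\gamma_{h_\delta}(\tau)^*$ together with the Lipschitz bound~\eqref{misspropp} and $X\hookrightarrow V$ gives
\[
\|u_\delta-u_\delta^*\|_V\leq (1+C_\Theta)K\,\|u_\delta-\tilde\gamma_{h_\delta}(\tau)\|+\|\tilde\gamma_{h_\delta}(\tau)-\gamma_{h_\delta}(\tau)^*\|_V\to 0.
\]
This extra triangle-inequality step is precisely what your argument is missing; once you accept that $u_h$ is only $X$-close to the curve, it is the only route to the symmetry conclusion.
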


\noindent
In turn, under the same hypothesis, we also have the following
\begin{corollary}
	\label{maintth1-cor}
For almost every $\lambda\in \Lambda$, $f(\lambda;\cdot)$ 
possesses a critical point $u_\lambda\in X$ at level $c(\lambda)$ and with $u_\lambda=u^*_\lambda$,
provided it satisfies $(SBPS)_{c(\lambda)}$.
\end{corollary}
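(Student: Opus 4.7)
The plan is to derive Corollary \ref{maintth1-cor} as a rather direct consequence of Theorem \ref{maintth1} combined with the $(SBPS)_{c(\lambda)}$ assumption and the two regularity tools already available: lower semicontinuity of the weak slope (Proposition \ref{lowersecslope}) and the Lipschitz property \eqref{misspropp} of the symmetrization map on $V$.

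First I would fix $\lambda\in\Lambda$ in the full measure set provided by Theorem \ref{maintth1} and select a $(SBPS)_{c(\lambda)}$-sequence $(u_n)\subset X$ for $f(\lambda;\cdot)$. By hypothesis the functional satisfies $(SBPS)_{c(\lambda)}$, so up to a subsequence $u_n\to u_\lambda$ in $X$. The continuity of $f(\lambda;\cdot)$ from $\boldsymbol{({\mathcal H}_1)}$ then yields $f(\lambda;u_\lambda)=c(\lambda)$, while the weak-slope lower semicontinuity gives
\[
|df(\lambda;\cdot)|(u_\lambda)\leq \liminf_{n\to\infty}|df(\lambda;\cdot)|(u_n)=0,
\]
so $u_\lambda$ is a critical point of $f(\lambda;\cdot)$ at level $c(\lambda)$ in the sense of Definition just before \ref{defps3}.

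It remains to check that $u_\lambda=u_\lambda^*$. Since $X\hookrightarrow V$ continuously (condition (1) of the abstract symmetrization framework \ref{abstractsymmetrrr}), the convergence $u_n\to u_\lambda$ in $X$ yields $u_n\to u_\lambda$ in $V$. Combined with the defining almost-symmetry property $\|u_n-u_n^*\|_V\to 0$ of an $(SBPS)_{c(\lambda)}$-sequence, this forces $u_n^*\to u_\lambda$ in $V$. On the other hand, the extended symmetrization map $*:X\to V$ is Lipschitz on $V$-metrics in view of \eqref{misspropp}, so
\[
\|u_n^*-u_\lambda^*\|_V\leq C_\Theta\|u_n-u_\lambda\|_V\to 0,
\]
giving $u_n^*\to u_\lambda^*$ in $V$ as well. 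By uniqueness of the limit in $V$ we obtain $u_\lambda=u_\lambda^*$, concluding the proof.

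There is no real obstacle in this argument beyond Theorem \ref{maintth1} itself, which encapsulates all the subtle work; the present corollary is just the natural passage to the limit in an $(SBPS)_{c(\lambda)}$-sequence under the assumed compactness. The only small point to bear in mind is that the symmetry identity $u_\lambda=u_\lambda^*$ is obtained in $V$ (not in $X$), which is precisely the level at which the abstract symmetrization and the almost-symmetry condition \eqref{quasisymmcond} are formulated; the continuous embedding $X\hookrightarrow V$ is what makes the identification meaningful for the limit $u_\lambda\in X$.
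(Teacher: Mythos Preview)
Your proof is correct and follows essentially the same route as the paper's own argument: pass to a convergent subsequence via $(SBPS)_{c(\lambda)}$, use continuity of $f(\lambda;\cdot)$ and Proposition~\ref{lowersecslope} to obtain the critical point at level $c(\lambda)$, and then recover $u_\lambda=u_\lambda^*$ in $V$ via the Lipschitz property~\eqref{misspropp} of the symmetrization together with the almost-symmetry of the sequence. The paper writes the symmetry step as a single triangle-inequality estimate rather than your ``uniqueness of limits'' phrasing, but the content is identical.
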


\noindent
Finally, inspired by Jeanjean~\cite[Corollary 1.2]{Jj}, we also have the following 
\begin{corollary}
	\label{maincor2}
Let $f(\lambda;\cdot)$ satisfy $(SBPS)_{c(\lambda)}$ for all $\lambda\in [1-\sigma,1]$, where $\sigma>0$. 
Then there exists a sequence $(\lambda_j,u_j)\subset [1-\sigma,1]\times X$ such 
that $\lambda_j\nearrow 1$ and for all $j\geq 1$
\begin{equation}
	\label{siamo1}
	f(\lambda_j;u_j)=c(\lambda_j),\,\,\qquad  |df(\lambda_j;\cdot)|(u_j)=0,\,\,\qquad u_j=u_j^*.
\end{equation}	
\end{corollary}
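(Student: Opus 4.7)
The plan is to deduce this corollary from Theorem~\ref{maintth1} combined with the $(SBPS)_{c(\lambda)}$ hypothesis, essentially by extracting convergent subsequences along a sequence $\lambda_j\nearrow 1$.

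First, by Theorem~\ref{maintth1} applied on the compact interval $\Lambda=[1-\sigma,1]$, the set
$$
A:=\{\lambda\in [1-\sigma,1]\, :\, f(\lambda;\cdot)\text{ admits a }(SBPS)_{c(\lambda)}\text{-sequence}\}
$$
has full Lebesgue measure in $[1-\sigma,1]$. In particular, $A$ accumulates at $1$ from the left, so I can pick a strictly increasing sequence $\lambda_j\in A$ with $\lambda_j\nearrow 1$. For each fixed $j$, let $(u_h^{(j)})_{h\in\N}\subset X$ be an associated $(SBPS)_{c(\lambda_j)}$-sequence for $f(\lambda_j;\cdot)$, i.e.\ it is bounded in $X$, $f(\lambda_j;u_h^{(j)})\to c(\lambda_j)$, $|df(\lambda_j;\cdot)|(u_h^{(j)})\to 0$, and $\|u_h^{(j)}-(u_h^{(j)})^*\|_V\to 0$ as $h\to\infty$.

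Next I invoke the standing assumption that $f(\lambda_j;\cdot)$ satisfies the symmetric bounded Palais-Smale condition $(SBPS)_{c(\lambda_j)}$: this produces a subsequence (not relabelled) converging in $X$ to some $u_j\in X$. Continuity of $f(\lambda_j;\cdot)$ gives $f(\lambda_j;u_j)=c(\lambda_j)$, while the lower semicontinuity of the weak slope (Proposition~\ref{lowersecslope}) yields
$$
|df(\lambda_j;\cdot)|(u_j)\leq \liminf_{h\to\infty}|df(\lambda_j;\cdot)|(u_h^{(j)})=0,
$$
hence $|df(\lambda_j;\cdot)|(u_j)=0$. To recover the symmetry $u_j=u_j^*$, I use the continuous embedding $X\hookrightarrow V$ to pass $u_h^{(j)}\to u_j$ into $V$, and then apply the Lipschitz estimate~\eqref{misspropp} on the (extended) symmetrization map to deduce $(u_h^{(j)})^*\to u_j^*$ in $V$. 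Combining with $\|u_h^{(j)}-(u_h^{(j)})^*\|_V\to 0$ gives $\|u_j-u_j^*\|_V=0$; since $u_j$ and $u_j^*$ both lie in $X$ and $X\hookrightarrow V$ is injective, this forces $u_j=u_j^*$ in $X$, which establishes~\eqref{siamo1}.

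There is no serious obstacle here, since the corollary is essentially a clean consequence of the (already quite involved) Theorem~\ref{maintth1}. The only mildly delicate point is the symmetry conclusion: one has to be careful that $u_j=u_j^*$ is extracted not from strong convergence in $X$ (where $u\mapsto u^*$ need not be continuous, cf.~\cite{almlieb}) but from the $V$-Lipschitz property~\eqref{misspropp} of the extended symmetrization, which is precisely why the axioms in~\ref{abstractsymmetrrr} are formulated as they are.
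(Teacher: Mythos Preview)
Your proposal is correct and follows essentially the same approach as the paper: invoke Theorem~\ref{maintth1} to get a full-measure set of good $\lambda$'s, pick $\lambda_j\nearrow 1$ therein, apply the $(SBPS)_{c(\lambda_j)}$ condition to extract a limit $u_j$, and use continuity, Proposition~\ref{lowersecslope}, and the $V$-Lipschitz estimate~\eqref{misspropp} (as in the paper's computation~\eqref{symcomp}) to verify~\eqref{siamo1}. Your write-up is slightly more explicit than the paper's (e.g.\ spelling out the injectivity of $X\hookrightarrow V$ for the symmetry conclusion), but the argument is the same.
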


\noindent
The monotonicity trick in the form of \cite{Jj,JT} is thus improved in light of the {\em symmetry}
conclusions, as commented in the introduction, provided that a symmetry assumption on $f$, that 
is $\boldsymbol{({\mathcal H}_4)}$, is assumed. 
Corollary~\ref{maincor2} is particularly useful for the study of the functional $f(1;\cdot)$
on the basis of the properties of the {\em nearby} functionals $f(\lambda_j;\cdot)$, when
bounded Palais-Smale sequences of $f(\lambda;\cdot)$ are precompact for 
any $\lambda\in [1-\sigma,1]$ (in particular for $\lambda=1$).
In fact, it is expected that starting from~\eqref{siamo1} (which imply, in a Sobolev functional framework, that $u_j$ 
is a symmetric weak solution of an elliptic PDE, possibly in a suitable generalized sense, and thus very likely it fulfills
{\em extra qualitative properties}) one can deduce
$$
\sup_{j\geq 1}\|u_j\|<+\infty,
$$
and (in turn, by $u_j\rightharpoonup u$ in $X$ as $j\to\infty$, up to a subsequence),
$$
f(1;u_j)\to c(1),
\,\,\,\quad 
|df(1;\cdot)|(u_j)\to 0,\qquad \text{as $j\to\infty$},
$$
provided that $\lambda\mapsto c(\lambda)$ is left continuous (c.f.~\cite[lemma 2.3]{Jj} where this proved in the $C^1$ case), namely
$f(1;\cdot)$ admits a bounded Palais-Smale sequence at the Mountain Pass value $c(1)$. Therefore,
by the precompactness of bounded Palais-Smale sequence for $f(1;\cdot)$,
one can conclude that, $u_j\to u$ in $X$ as $j\to\infty$, so that
$f(1;\cdot)$ admits a nontrivial symmetric $(u=u^*)$ critical point $u$ at the Mountain Pass level $c(1)$. 
The symmetry, of course, follows by observing that (on account of~\eqref{siamo1} and~\eqref{misspropp})
$$
\|u-u^*\|_V\leq \|u-u_j\|_V+\|u_j-u^*_j\|_V+\|u^*_j-u^*\|_V\leq 2\|u-u_j\|_V\leq C\|u-u_j\|,
$$
yielding the desired conclusion, since $u_j\to u$ in $X$, as $j\to\infty$.
This line of argument has been successfully followed, without the additional symmetry property, 
in~\cite{Jtanak}, based upon the monotonicity trick of Jeanjean. 
Let us also mention that, in a more recent work \cite{azzpomp}, the authors restrict the functional
to a (Sobolev) space $X_r$ of {\em symmetric functions} in order to recover compactness. With
the improved version of the principle given by Corollary~\ref{maincor2}, the compactness would be recovered even working in the full space $X$, 
by crucially exploiting that $u_j=u_j^*$, see~\eqref{siamo1}, coming from the symmetry of the 
energy functional. Notice that, in~\cite{azzpomp}, the solution energy level is
$$
c_r(1)=\inf_{\gamma\in\Gamma_r}\sup_{t\in [0,1]} f(1;\gamma(t)),\quad \Gamma_r=\{\gamma\in C([0,1],X_r):\gamma(0)=0,\, f(1;\gamma(1))<0\},
$$
while using Corollary~\ref{maincor2}, we would find the solution at the level
$$
c(1)=\inf_{\gamma\in\Gamma}\sup_{t\in [0,1]} f(1;\gamma(t)),\quad \Gamma=\{\gamma\in C([0,1],X):\gamma(0)=0,\, f(1;\gamma(1))<0\},
$$
thus maintaining the {\em global minimizing property} of the Mountain Pass value.
\vskip2pt
Finally, Theorem~\ref{maintth1} and Corollary~\ref{maintth1-cor} hold for continuous functionals, in the framework of 
non-smooth critical point theory, allowing applications to quasi-linear PDEs (cf.~\cite{sq-monograph}).

\begin{remark}\rm
	\label{concreteexample}
A possible concrete framework where the abstract results can be applied is the following. 
Let $\Omega$ be either the whole $\R^N$ or the unit ball 
$B\subset\R^N$ centered at the origin, $N>p\geq 2$, $b>a>0$ and let 
$f:[a,b]\times W^{1,p}_0(\Omega)\to \R$ be the functional defined by
\begin{equation}
	\label{functtt}
f(\lambda;u):=\int_{\Omega}j(u,|\nabla u|)+\frac{1}{p}\int_\Omega |u|^p-\lambda\int_{\Omega}G(|x|,u),
\end{equation}
where $j\in C^1(\R\times\R^+)$, $t\mapsto j(s,t)$ is
strictly convex and increasing and there exist constants $\alpha_0,\alpha_1>0$ such that
$\alpha_0t^p\leq j(s,t)\leq \alpha_1 t^p$ for all $s\in\R$ and $t\in\R^+$ (see 
e.g.~the monograph~\cite{sq-monograph}).
Then, if the functions $G(|x|,s)=\int_0^s g(|x|,t)dt$, $g(|x|,s)$,
$j_s(s,t)$ and $j_t (s,t)$ satisfy suitable assumptions, 
conditions $\boldsymbol{({\mathcal H}_1)}$-$\boldsymbol{({\mathcal H}_4)}$
hold. In particular, it holds
$$
f(\lambda;u^H)\leq f(\lambda;u),\qquad
\text{for all $\lambda\in [a,b]$, any $H\in {\mathcal H}_*$ and $u\in W^{1,p}_0(\Omega)$},
$$
whenever $r\mapsto g(r,s)$ is decreasing, $j(|s|,t)\leq j(s,t)$ and $G(|x|,s)\leq G(|x|,|s|)$.
Notice that, if the growth of $j$ 
is weakened into $\alpha_0|\xi|^p\leq j(s,|\xi|)\leq \alpha(|s|)|\xi|^p$
for some possibly unbounded function $\alpha\in C(\R)$, then \eqref{functtt}
is merely lower semi-continuous from $W^{1,p}_0(\Omega)$ to $\R\cup\{+\infty\}$, for any $\lambda\in [a,b]$.
Statements~\ref{maintth1}-\ref{maincor2} are expected to hold also for {\em lower semi-continuous} 
functionals with suitable assumptions \cite{sqradI}.
On the other hand, in order to avoid excessive technicalities, we prefer to confine 
the analysis to the continuous case.
\end{remark}

\section{Proof of the results}
\label{prove}

Let $\lambda_0\in \Lambda$ be such that there
exist $Q(\lambda_0)\in\R$ and a strictly increasing sequence $(\lambda_h)$ converging to 
$\lambda_0$ as $h\to\infty$ and 
\begin{equation}
	\label{dung}
\frac{c(\lambda_h)-c(\lambda_0)}{\lambda_0-\lambda_h}\leq Q(\lambda_0),\,\,\quad\text{for all $h\geq 1$}.
\end{equation}
As pointed out in \cite{JT}, due to a result of Denjoy, the set $D\subseteq\Lambda$
of such points $\lambda_0$ is such that ${\mathcal L}^1(\Lambda\setminus D)=0$,
where ${\mathcal L}^1$ denotes the one-dimensional Lebesgue measure (for a $\lambda\in\Lambda\setminus D$
we would have the Dini's derivatives equal to $D^-c(\lambda_0)=D_-c(\lambda_0)=-\infty$, which is only possible on a set of zero measure).
\vskip2pt
First we formulate an improvement of~\cite[Lemma 2.1]{JT}, where the existence of suitable,
almost symmetric paths in $\Gamma$ enjoying special properties is obtained.  
We shall state the result for lower semi-continuous functionals.

\begin{lemma}
	\label{lemmaevolut}
	Assume that $f:\Lambda\times X\to\R\cup\{+\infty\}$ is a family of lower 
	semi-continuous functionals and that $\boldsymbol{({\mathcal H}_2)}$-$
	\boldsymbol{({\mathcal H}_4)}$ hold. Let $\lambda_0\in \Lambda$ be such that~\eqref{dung}
	is satisfied and let $(\lambda_h)$ be a related strictly increasing sequence converging to $\lambda_0$.
Then there exist $\bar h\geq 1$, two sequences of paths $(\gamma_h)_{h\geq \bar h},
(\tilde\gamma_h)_{h\geq \bar h}\subset\Gamma$ with $\gamma_h(\Dis),\tilde\gamma_h(\Dis)\subset S$,
a sequence $(M_h)_{h\geq \bar h}$ of nonempty closed subsets of $\Dis$, disjoint from $\Sf$, and a positive constant
${\mathcal M}(\lambda_0)$ such that
\begin{equation}
	\label{closssdd}
\|\tilde\gamma_h(t)-\gamma_h(t)^*\|_V\leq \lambda_0-\lambda_h,
\,\,\quad\text{for all $t\in M_h$},
\end{equation}
\begin{equation}
	\label{prima}
f(\lambda_0;\tilde\gamma_h(t))\geq c(\lambda_0)-\lambda_0+\lambda_h
\quad
\Longrightarrow
\quad
\|\tilde\gamma_h(t)\|\leq {\mathcal M}(\lambda_0),
\end{equation}
for all $h\geq \bar h$ and furthermore, for all $\eps>0$, it holds
\begin{equation}
	\label{seconda}
\sup_{t\in \Dis} f(\lambda_0;\tilde\gamma_h(t))\leq \sup_{t\in \Dis} f(\lambda_0;\gamma_h(t))\leq c(\lambda_0)+\eps
\end{equation}
for all $h\geq \bar h$ sufficiently large.
\end{lemma}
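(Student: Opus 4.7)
The plan is to follow the Jeanjean--Toland \cite{JT} construction, interleaving it with Proposition~\ref{approxxres} and assumption $\boldsymbol{({\mathcal H}_4)}$ so as to obtain \emph{symmetric} minimax paths that still enjoy the analytic bounds of the original argument. For each $h$ I first pick $\gamma_h^0\in\Gamma$ almost-minimising for $c(\lambda_h)$, namely $\sup_{t\in\Dis} f(\lambda_h;\gamma_h^0(t))\le c(\lambda_h)+(\lambda_0-\lambda_h)$. The first clause of $\boldsymbol{({\mathcal H}_4)}$ then yields $\gamma_h:=\hat\gamma_h^0\in\Gamma$ with $\gamma_h(\Dis)\subset S$, $\gamma_h|_{\Sf}^{H_0}\in\Gamma_0$ and $f(\lambda;\gamma_h(t))\le f(\lambda;\gamma_h^0(t))$ pointwise. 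Since $\boldsymbol{({\mathcal H}_2)}$ gives $f(\lambda_0;\gamma_h(\tau))\le a(\lambda_0)<(c(\lambda_0)+a(\lambda_0))/2$ for every $\tau\in\Sf$, I choose $M_h$ as a closed subset of $\Dis$ disjoint from $\Sf$ containing $\{t:f(\lambda_0;\gamma_h(t))\ge(c(\lambda_0)+a(\lambda_0))/2\}$, for instance a closed tubular complement $\{t\in\Dis:d(t,\Sf)\ge r_h\}$ with $r_h>0$ sufficiently small. Applying Proposition~\ref{approxxres} to $\gamma_h$ with $\delta=\lambda_0-\lambda_h$, $M=M_h$, and the $H_0$ above produces $\tilde\gamma_h\in C(\Dis,X)$ of the form $\tilde\gamma_h(t)=\gamma_h(t)^{H_0H_1\cdots H_{[\theta]}H_\theta}$ with $\tilde\gamma_h|_{\Sf}=\gamma_h|_{\Sf}^{H_0}\in\Gamma_0$; this places $\tilde\gamma_h$ in $\Gamma$ with $\tilde\gamma_h(\Dis)\subset S$, and \eqref{closssdd} is immediate.

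Next, iterating the second clause of $\boldsymbol{({\mathcal H}_4)}$ along the polarisation chain gives
\[
f(\lambda;\tilde\gamma_h(t))\le f(\lambda;\gamma_h(t))\le f(\lambda;\gamma_h^0(t)),\qquad \lambda\in\Lambda,\ t\in\Dis,
\]
which covers the first inequality of \eqref{seconda}. For \eqref{prima}, pick any $t\in\Dis$ with $f(\lambda_0;\tilde\gamma_h(t))\ge c(\lambda_0)-(\lambda_0-\lambda_h)$ and set $u_h:=\tilde\gamma_h(t)$: the estimates above give $f(\lambda_h;u_h)\le c(\lambda_h)+(\lambda_0-\lambda_h)$, the lower bound assumption gives $-f(\lambda_0;u_h)\le -c(\lambda_0)+(\lambda_0-\lambda_h)$, and combining these with \eqref{dung} yields
\[
\frac{f(\lambda_h;u_h)-f(\lambda_0;u_h)}{\lambda_0-\lambda_h}\le\frac{c(\lambda_h)-c(\lambda_0)}{\lambda_0-\lambda_h}+2\le Q(\lambda_0)+2;
\]
since the three bounds are uniform in $h$ and in $t$, $\boldsymbol{({\mathcal H}_3)}$ delivers a uniform $\mathcal{M}(\lambda_0)$ with $\|u_h\|\le\mathcal{M}(\lambda_0)$, which is \eqref{prima}. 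For the second inequality in \eqref{seconda}, fix $\eps>0$ and choose $t_h\in\Dis$ realising $\sup_tf(\lambda_0;\gamma_h(t))$ up to error $\lambda_0-\lambda_h$; since $\gamma_h\in\Gamma$ forces $\sup_tf(\lambda_0;\gamma_h(t))\ge c(\lambda_0)$, the sequence $v_h:=\gamma_h(t_h)$ fulfils the same three $\boldsymbol{({\mathcal H}_3)}$-conditions, so its second conclusion produces $f(\lambda_0;v_h)\le f(\lambda_h;v_h)+\eps/2$ for $h$ large. Chaining with $f(\lambda_h;v_h)\le c(\lambda_h)+(\lambda_0-\lambda_h)$ and \eqref{dung} yields $\sup_tf(\lambda_0;\gamma_h(t))\le c(\lambda_0)+\eps$ eventually.

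The main obstacle I expect is the mismatch between the two norm scales: the symmetric approximation of Proposition~\ref{approxxres} is only $V$-close, whereas the Palais--Smale structure and \eqref{prima} live in the finer space $X$, and polarisation is declared merely $V$-nonexpansive by property~(5) of the abstract framework, so there is \emph{no} direct $X$-bound on $\tilde\gamma_h(t)$ in terms of $\gamma_h(t)$. The rescue is precisely the inequality $f(\lambda;\tilde\gamma_h(t))\le f(\lambda;\gamma_h(t))$ propagated by $\boldsymbol{({\mathcal H}_4)}$ along the chain: this allows one to verify the hypotheses of $\boldsymbol{({\mathcal H}_3)}$ \emph{directly} on $u_h=\tilde\gamma_h(t)$, recovering the required $X$-bound for free. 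A secondary technical point, relevant to the lower semi-continuous statement, is that super-level sets of $f(\lambda_0;\gamma_h(\cdot))$ need not be closed, which is circumvented by the closed tubular choice of $M_h$ above; no further property of $M_h$ is invoked in the proof, so this is harmless.
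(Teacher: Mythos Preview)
Your proof is correct and follows essentially the same line as the paper's: almost-minimise at level $c(\lambda_h)$, push the path into $S$ via $\boldsymbol{({\mathcal H}_4)}$, invoke Proposition~\ref{approxxres} to get $\tilde\gamma_h$, and then feed both $\gamma_h$ and $\tilde\gamma_h$ into $\boldsymbol{({\mathcal H}_3)}$ to obtain \eqref{prima} and \eqref{seconda}. The verification of the three $\boldsymbol{({\mathcal H}_3)}$ inequalities and the use of \eqref{dung} are exactly as in the paper (which in turn defers to \cite[Lemma~2.1]{JT}).

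The one genuine difference is the construction of $M_h$. The paper first reparametrises by a map $\vartheta:\Dis\to\Dis$ sending the outer collar $\overline{\Dis\setminus\tfrac12\Dis}$ onto $\Sf$, so that the new path $\gamma_h=\varrho_h\circ\vartheta$ takes \emph{boundary values on a whole neighbourhood of $\Sf$}; it then sets $M_h$ to be the closure of $(f(\lambda_0;\cdot)\circ\gamma_h)^{-1}([c(\lambda_0)-3\omega,c(\lambda_0)+\omega])$. The collar trick is precisely what makes $M_h\cap\Sf=\emptyset$ provable in the lower semi-continuous setting, where super-level sets of $f(\lambda_0;\gamma_h(\cdot))$ may accumulate on $\Sf$. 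Your tubular choice $\{t:d(t,\Sf)\ge r_h\}$ is fine for the lemma as stated (since, as you note, nothing beyond ``closed, nonempty, disjoint from $\Sf$'' is used in \eqref{closssdd}--\eqref{seconda}), but your claim that it \emph{contains} the super-level set $\{t:f(\lambda_0;\gamma_h(t))\ge(c(\lambda_0)+a(\lambda_0))/2\}$ is not justified when $f$ is merely l.s.c.: the values of $f(\lambda_0;\gamma_h(\cdot))$ are only controlled \emph{on} $\Sf$, not \emph{near} it. The paper's reparametrisation buys two things at once: it rescues this disjointness argument without any continuity of $f$, and it produces an $M_h$ with explicit level-set structure, which is later exploited in the proof of Theorem~\ref{maintth1} to show $A_j\subset\tilde\gamma_j(M_j)$. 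Your $M_h$, while adequate here, would need to be revisited there.

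A minor point: your argument for the second inequality in \eqref{seconda} tacitly assumes $\sup_t f(\lambda_0;\gamma_h(t))<+\infty$ when choosing $t_h$ to realise it up to error $\lambda_0-\lambda_h$. This is easily patched by running the same $\boldsymbol{({\mathcal H}_3)}$ estimate as a contradiction argument (if the sup exceeds $c(\lambda_0)+\eps$ along a subsequence, pick $t_h$ with value above $c(\lambda_0)+\eps$ and derive a contradiction from the second conclusion of $\boldsymbol{({\mathcal H}_3)}$).
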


\begin{proof}
By the definition of $c(\lambda_h)$, as in~\cite[Lemma 2.1]{JT}, we can select a sequence 
$(\varrho_h)\subset \Gamma$ of curves such that, for all $h\geq 1$ large,
\begin{equation}
	\label{construct0}
\sup_{t\in \Dis} f(\lambda_h;\varrho_h(t))\leq c(\lambda_h)+\lambda_0-\lambda_h.
\end{equation}
In view $\boldsymbol{({\mathcal H}_4)}$, up to substituting $\varrho_h$ with $\hat\varrho_h$,
without loss of generality for all $h\geq 1$ we may assume that $\varrho_h(\Dis)\subset S$ and $\varrho_h|_{\Sf}^{H_0(h)}\in\Gamma_0$, 
for some polarizer $H_0(h)\in {\mathcal H}_*$.
Let now $\vartheta\in C(\Dis,\Dis)$ be defined by setting
$\vartheta(\tau)=\tau|\tau|^{-1}$ for all $\tau\in \overline{\Dis\setminus\Dis/2}$
and $\vartheta(\tau)=2\tau$ for all $\tau\in \Dis/2$. Consider now
the curve $\gamma_h:\Dis\to X$, defined as $\gamma_h(\tau):=\varrho_h(\vartheta(\tau))$, for all $\tau\in\Dis$.
Then, $\gamma_h\in\Gamma$, $\gamma_h(\Dis)=\varrho_h(\vartheta(\Dis))=\varrho_h(\Dis)\subset S$ and, of course,
\begin{equation}
	\label{construct0-bis}
\sup_{t\in \Dis} f(\lambda_h;\gamma_h(t))\leq c(\lambda_h)+\lambda_0-\lambda_h.
\end{equation}
Then, by arguing exactly as in the proof of~\cite[Lemma 2.1(ii)]{JT} by $\boldsymbol{({\mathcal H}_3)}$, for all $\eps>0$, 
\begin{equation}
	\label{conseqparipari}
\sup_{t\in \Dis} f(\lambda_0;\gamma_h(t))\leq c(\lambda_0)+\eps
\end{equation}
for every $h\geq 1$ large enough. In view of assumption $\boldsymbol{({\mathcal H}_2)}$, 
there exists $\omega=\omega(\lambda_0)>0$ small enough that
$c(\lambda_0)-3\omega>a(\lambda_0)$. Let us set
\begin{equation}
	\label{Mdefin}
M_h:=\overline{(f(\lambda_0;\cdot)\circ \gamma_h)^{-1}([c(\lambda_0)-3\omega,c(\lambda_0)+\omega])}.
\end{equation}
Therefore, $M_h\subset \Dis$ is of course closed and nonempty 
(just take $\eps=\omega$ in \eqref{conseqparipari} and use the definition of $c(\lambda_0)$) 
for $h\geq \bar h$, for some $\bar h=\bar h(\omega)\geq 1$. Moreover, $M_h\cap \Sf=\emptyset$ for all
$h\geq \bar h$. In fact, assume by contradiction
that, for some $h\geq \bar h$, there exists $\tau_h\in M_h\cap \Sf$. In turn, by definition, 
there exists a sequence $\xi_j^h\subset\Dis$ with $\xi_j^h\to \tau_h\in \Sf$ as $j\to\infty$ and 
$$
c(\lambda_0)-3\omega\leq f(\lambda_0;\varrho_h(\vartheta(\xi_j^h)))\leq c(\lambda_0)+\omega,
$$
for all $j\geq 1$. Then, noticing that $\vartheta(\xi_j^h)\in \Sf$ for $j\geq 1$ 
sufficiently large by the definition of $\vartheta$, we can conclude that
$$
c(\lambda_0)-3\omega\leq f(\lambda_0;\varrho_h(\vartheta(\xi_j^h)))\leq 
\sup_{\tau\in \Sf} f(\lambda_0;\varrho_h(\tau))\leq
a(\lambda_0)<c(\lambda_0)-3\omega,
$$
yielding the desired contradiction.
Then, on account of Proposition~\ref{approxxres}, for every $h\geq \bar h$, there exists a curve 
$\tilde\gamma_h\in C(\Dis,X)$ with $\tilde\gamma_h(\Dis)\subset S$ such that $\|\tilde\gamma_h(t)-\gamma_h(t)^*\|_V\leq \lambda_0-\lambda_h$,
for all $t\in M_h$ and $\tilde\gamma_h(\tau)=\gamma_h(\tau)^{H_0(h)}$, for all $\tau\in \Sf$.
In particular~\eqref{closssdd} holds. Furthermore, it is $\tilde\gamma_h\in\Gamma$, since
$$
\tilde\gamma_h|_{\Sf}=\gamma_h|_{\Sf}^{H_0(h)}=\varrho_h|_{\Sf}^{H_0(h)}\in\Gamma_0.
$$
Taking into account how $\tilde\gamma_h$ is constructed (by iterated 
polarizations, according to Lemma~\ref{approxxres}), 
by assumption $\boldsymbol{({\mathcal H}_4)}$ and inequality~\eqref{construct0-bis}, for all $h\geq \bar h$ we have
\begin{equation}
	\label{construct1}
\sup_{t\in \Dis} f(\lambda_h;\tilde\gamma_h(t))\leq
\sup_{t\in \Dis} f(\lambda_h;\gamma_h(t))\leq c(\lambda_h)+\lambda_0-\lambda_h.
\end{equation}
At this point, proceeding exactly as in the proof of~\cite[Lemma 2.1(i)]{JT} there exists 
a positive constant ${\mathcal M}={\mathcal M}(\lambda_0)$ such that implication~\eqref{prima} hold. Finally, by combining 
\eqref{conseqparipari} with $f(\lambda_0;\tilde\gamma_h(t))\leq f(\lambda_0;\gamma_h(t))$
(again in light of $\boldsymbol{({\mathcal H}_4)}$) it also follows that \eqref{seconda} holds.
\end{proof}

We can now proceed with the proof of the main result, Theorem~\ref{maintth1}.

\subsection{Proof of Theorem~\ref{maintth1}}
Fix an arbitrary $\lambda_0\in \Lambda$ such that condition~\eqref{dung}
is satisfied and let $(\lambda_h)$ be a related strictly increasing sequence converging to $\lambda_0$.
We know that the set $D\subseteq\Lambda$ of such values has full measure ${\mathcal L}^1(\Lambda)$.
According to Lemma~\ref{lemmaevolut}, there exist $\bar h\geq 1$ (depending upon $\lambda_0$), two sequences of paths $(\gamma_h)_{h\geq \bar h},
(\tilde\gamma_h)_{h\geq \bar h}\subset\Gamma$ with $\gamma_h(\Dis),\tilde\gamma_h(\Dis)\subset S$,
a sequence $(M_h)_{h\geq \bar h}$ of nonempty closed subsets of $\Dis$, disjoint from $\Sf$, and a positive constant
${\mathcal M}(\lambda_0)$ such that conditions~\eqref{closssdd}, \eqref{prima} and \eqref{seconda} hold. 
Let $\omega=\omega(\lambda_0)$ be the positive number which appears in the definition~\eqref{Mdefin} of $M_h$.
Then, for any fixed $\delta\in (0,\omega]$ small, there exists $h_\delta\geq \bar h$ such that
the following facts hold:
\begin{align}
	\label{secondabis}
& \sup_{t\in \Dis} f(\lambda_0;\tilde\gamma_{h_\delta}(t))\leq \sup_{t\in \Dis} 
f(\lambda_0;\gamma_{h_\delta}(t))\leq c(\lambda_0)+\delta,\qquad
0<\lambda_0-\lambda_{h_\delta}\leq \delta,  \\
	\label{primabis}
& f(\lambda_0;\tilde\gamma_{h_\delta}(t))\geq c(\lambda_0)-\lambda_0+\lambda_{h_\delta}
\quad
\Longrightarrow
\quad
\|\tilde\gamma_{h_\delta}(t)\|\leq {\mathcal M}(\lambda_0), \\
\noalign{\vskip3.5pt}
	\label{closssdd-spec}
& \|\tilde\gamma_{h_\delta}(t)-\gamma_{h_\delta}(t)^*\|_V\leq \delta,
\,\,\quad\text{for all $t\in M_{h_\delta}$}.
\end{align}
For all $\delta\in (0,\omega]$, we denote by $A_\delta$ the closed set defined as follows
$$
A_\delta:=\big\{u\in X: \|u\|\leq {\mathcal M}(\lambda_0),\,\, u\in \tilde\gamma_{h_\delta}(\Dis)
\cap f(\lambda_0;\cdot)^{-1}([c(\lambda_0)-2\delta,c(\lambda_0)+2\delta])\big\},
$$
and we set
$$
C_\delta:=\{u\in X:\, d(u,A_\delta)\leq \sqrt{\delta},\,\,  c(\lambda_0)-2\delta \leq f(\lambda_0,u)\leq c(\lambda_0)+2\delta\}.
$$
Since $f(\lambda_0;\cdot)$ is continuous, $C_\delta$ is of course closed in $X$.
We claim that $C_\delta\not=\emptyset$, for any $\delta\in (0,\omega]$. In fact, let 
$w_\delta:=\tilde\gamma_{h_\delta}(t_\delta)\in S$ with $t_\delta\in\Dis$, by continuity, such that
$$
\max_{t\in \Dis} f(\lambda_0;\tilde\gamma_{h_\delta}(t))=f(\lambda_0;w_\delta).
$$ 
Then, it follows that
$$
c(\lambda_0)-2\delta\leq c(\lambda_0)-\lambda_0+\lambda_{h_\delta}
\leq c(\lambda_0)\leq f(\lambda_0;w_\delta)\leq c(\lambda_0)+2\delta.
$$
This, by virtue of~\eqref{primabis}, also yields
$\|w_\delta\|=\|\tilde\gamma_{h_\delta}(t_\delta)\|\leq {\mathcal M}(\lambda_0)$. Hence $w_\delta\in A_\delta$
and, in turn, $w_\delta\in C_\delta$, proving the claim.
Given now $\delta\in (0,\omega]$, assume by contradiction that
\begin{equation}
	\label{mainimplicc}
\forall u\in X:\quad
u\in C_\delta		
\quad
\Longrightarrow
\quad 
|df(\lambda_0;\cdot)|(u)>2\sqrt{\delta}.
\end{equation}
By the Quantitative Deformation Lemma~\ref{theorlem} (applied to $f(\lambda_0;\cdot)$ with 
the choice $\sigma:=\sqrt{\delta}$), we can find a continuous 
map $\eta_\delta:X\times [0,1]\to X$ with the following properties: 
\begin{gather}
	\label{prim}
f(\lambda_0;\eta_\delta (u,t))\leq f(\lambda_0;u),\qquad  \|\eta_\delta(u,t)-u\|\leq \sqrt{\delta}t, \\
	\label{secon}
u\in A_\delta,\,\,  c(\lambda_0)-\delta \leq f(\lambda_0,u)\leq c(\lambda_0)+\delta
\,\,\,\Longrightarrow\,\,\,
f(\lambda_0;\eta_\delta(u,1))\leq c(\lambda_0)-\delta,
\end{gather}
for all $u\in X$ and $t\in[0,1]$. Let now $\Theta:X\to [0,1]$ be a continuous function such that
\begin{align*}
& \Theta(u)=0,\quad \text{for all $u\in C_1$},\qquad C_1:=\{u\in X: f(\lambda_0;u)\leq a(\lambda_0)\}, \\
&\Theta(u)=1,\quad \text{for all $u\in C_2$},\qquad C_2:=\{u\in X: f(\lambda_0;u)\geq c(\lambda_0)-\delta\}.
\end{align*}
Such a map exists since $C_1,C_2$ are nonempty closed subsets of $X$ 
and $C_1\cap C_2=\emptyset$. Then, we consider the curve 
$\hat\gamma:\Dis\to X$ defined by setting 
$$
\hat\gamma(t):=\eta_\delta(\tilde\gamma_{h_\delta}(t),\Theta(\tilde\gamma_{h_\delta}(t))),\quad \text{for all $t\in \Dis$}.
$$
Of course $\hat\gamma$ is continuous. 
Moreover, $\hat\gamma|_{\Sf}$ belongs to $\Gamma_0$. In fact, taken $\tau\in\Sf$, we have
$$
f(\lambda_0;\tilde\gamma_{h_\delta}(\tau))\leq \sup_{\gamma_0\in\Gamma_0}\sup_{\tau\in\Sf} f(\lambda_0;\gamma_0(\tau))=a(\lambda_0).
$$
Then, by the definition and properties of $\eta_\delta$ and $\Theta$, we have
$$
\hat\gamma(\tau)=\eta_\delta(\tilde\gamma_{h_\delta}(\tau),\Theta(\tilde\gamma_{h_\delta}(\tau)))
=\eta_\delta(\tilde\gamma_{h_\delta}(\tau),0)=\tilde\gamma_{h_\delta}(\tau),
\quad\text{for every $\tau\in\Sf$}.
$$
Thus $\hat\gamma$ belongs to $\Gamma$. Consider now an
arbitrary point $t\in \Dis$. If it is the case that
$$
f(\lambda_0;\tilde\gamma_{h_\delta}(t))\leq c(\lambda_0)-(\lambda_0-\lambda_{h_\delta}), 
$$
then by the first inequality in~\eqref{prim}, we have
\begin{equation}
	\label{concl1}
f(\lambda_0;\hat\gamma(t))\leq c(\lambda_0)-(\lambda_0-\lambda_{h_\delta}).
\end{equation}
On the contrary, in the case 
$$
f(\lambda_0;\tilde\gamma_{h_\delta}(t))> c(\lambda_0)-(\lambda_0-\lambda_{h_\delta})\geq c(\lambda_0)-\delta,
$$ 
it then follows by~\eqref{primabis} that $\|\tilde\gamma_{h_\delta}(t)\|\leq {\mathcal M}(\lambda_0)$, namely, on account of \eqref{secondabis}
$$
\tilde\gamma_{h_\delta}(t) \in A_\delta, \quad    c(\lambda_0)-\delta\leq f(\lambda_0;\tilde\gamma_{h_\delta}(t))\leq c(\lambda_0)+\delta,
$$ 
yielding, by virtue of implication~\eqref{secon} and the definition of $\Theta$,
\begin{equation}
	\label{concl2}
f(\lambda_0;\hat\gamma(t))= f(\lambda_0;\eta_\delta(\tilde\gamma_{h_\delta}(t),1))\leq c(\lambda_0)-\delta
\leq c(\lambda_0)-(\lambda_0-\lambda_{h_\delta}).
\end{equation}
Hence, by combining inequalities~\eqref{concl1}-\eqref{concl2}, we conclude that
$$
c(\lambda_0)\leq \sup_{t\in [0,1]}f(\lambda_0;\hat\gamma(t))
\leq c(\lambda_0)-(\lambda_0-\lambda_{h_\delta})<c(\lambda_0),
$$
namely the desired contradiction. Therefore, by choosing
$\delta=1/j$, there exists a sequence $(u_j)\subset X$ ($u_j\in C_j$), contained
in the ball centered at the origin and of radius ${\mathcal M}(\lambda_0)+2$, 
such that $f(\lambda_0;u_j)\to c(\lambda_0)$, as $j\to\infty$,
and $|df(\lambda_0;\cdot)|(u_j)\to 0$, as $j\to\infty$. At this stage, we have proved that $f(\lambda_0;\cdot)$ 
admits a bounded Palais-Smale sequence at the Mountain Pass value $c(\lambda_0)$. Let now $A_j$, $M_j$, 
$\gamma_j$ and $\tilde\gamma_j$ denote $A_{\delta}$, $M_{h_{\delta}}$, $\gamma_{h_{\delta}}$ and $\tilde\gamma_{h_{\delta}}$
respectively, with $\delta=1/j$ for $j\geq 1/\omega$. We claim that $A_j\subset \tilde\gamma_j(M_j)$. If $y\in A_j$, there
exists $\tau\in\Dis$ with $y=\tilde\gamma_j(\tau)$ and 
$c(\lambda_0)-2/j\leq f(\lambda_0;\tilde\gamma_j(\tau))\leq c(\lambda_0)+2/j$, 
yielding, by $\boldsymbol{({\mathcal H}_4)}$ and\eqref{secondabis},
$$
c(\lambda_0)-3\omega\leq c(\lambda_0)-2/j\leq f(\lambda_0;\tilde\gamma_j(\tau))\leq 
f(\lambda_0;\gamma_j(\tau))\leq c(\lambda_0)+1/j\leq c(\lambda_0)+\omega.
$$
Hence, $\tau\in (f(\lambda_0;\cdot)\circ \gamma_{j})^{-1}([c(\lambda_0)-3\omega,c(\lambda_0)+\omega])\subset M_j$,
namely $y\in \tilde\gamma_{j}(M_j)$, proving the claim. Hence, from $d(u_j,A_j)\leq 1/\sqrt{j}$ (recall that $u_j\in C_j$), we deduce
\begin{equation}
	\label{controlsimtry}
d(u_j,\tilde\gamma_{j}(M_j))\leq {1}/{\sqrt{j}}.
\end{equation}
According to Section~\ref{abstractsymmetrrr}, $u^*_j$ is defined. Moreover,
for all $\tau\in M_j$, since $\tilde\gamma_{j}(\tau)^*=\gamma_{j}(\tau)^*$ by construction
and (3) of framework~\ref{abstractsymmetrrr}, we have $\|\gamma_{j}(\tau)^*-u^*_j\|_V\leq 
C_\Theta\|\tilde\gamma_{j}(\tau)-u_j\|_V$, by inequality~\eqref{misspropp}. Then, for some constant $C$, 
on account of~\eqref{closssdd-spec} and~\eqref{controlsimtry}, 
\begin{align*}
\|u_j-u^*_j\|_V &\leq\inf_{\tau\in M_j}\big[
\|u_j-\tilde\gamma_{j}(\tau)\|_V+
\|\tilde\gamma_{j}(\tau)-\gamma_{j}(\tau)^*\|_V+
\|\gamma_{j}(\tau)^*-u^*_j\|_V \big] \\
&\leq\inf_{\tau\in M_j}\big[
(1+C_\Theta)K\|u_j-\tilde\gamma_{j}(\tau)\|+
\|\tilde\gamma_{j}(\tau)-\gamma_{j}(\tau)^*\|_V \big]\leq C/\sqrt{j},
\end{align*}
where $K>0$ is the continuity constant of $X\hookrightarrow V$. This concludes the proof.
\qed

\subsection{Proof of Corollary~\ref{maintth1-cor}}
Let $\lambda_0\in\Lambda$ such that there exists a $(SBPS)_{c(\lambda_0)}$-sequence $(u_j)\subset X$.
Since $f(\lambda_0;\cdot)$ satisfies $(SBPS)_{c(\lambda_0)}$,
there exists a subsequence $(u_{j_m})$ of $(u_j)$ which converges 
to some $u$ in $X$. By Proposition~\ref{lowersecslope}, we have $|df(\lambda_0;\cdot)|(u)=0$. By continuity,
$f(\lambda_0;u)=c(\lambda_0)$. Recalling~\eqref{misspropp}, 
\begin{align}
	\label{symcomp}
\|u-u^*\|_V &\leq \lim_{j\to\infty} (\|u-u_{j_m}\|_V+\|u_{j_m}-u^*_{j_m}\|_V+\|u^*_{j_m}-u^*\|_V) \\
& \leq \lim_{j\to\infty} ((1+C_\Theta)K\|u-u_{j_m}\|+\|u_{j_m}-u^*_{j_m}\|_V)=0,  \notag
\end{align}
yielding $u=u^*$, as desired.
\qed

\subsection{Proof of Corollary~\ref{maincor2}}
There exists a strictly increasing sequence $(\lambda_j)\subset [1-\sigma,1]$ 
converging to $1$ such that, for each $j\geq 1$, the functional $f(\lambda_j;\cdot)$
admits a Symmetric Bounded Palais-Smale sequence $(u^j_m)$ at the Mountain Pass energy level $c(\lambda_j)$, namely
\begin{equation*}
	\lim_m f(\lambda_j;u^j_m)=c(\lambda_j),\qquad  \lim_m |df(\lambda_j;\cdot)|(u^j_m)= 0,
	\qquad \lim_m\|u^j_m-u^{j*}_m\|_V= 0.
\end{equation*}
Since $f(\lambda_j;\cdot)$ satisfies $(SBPS)_{c(\lambda_j)}$, for all $j\geq 1$
there exists a subsequence $(u^j_{m_k})$ of $(u^j_m)$ such that $u^j_{m_k}\to u_j$ in $X$, as $k\to\infty$.
Recalling Proposition~\ref{lowersecslope}, we see that properties \eqref{siamo1} hold. Notice that the 
symmetry conclusion follows again as in~\eqref{symcomp}. \qed

\vskip30pt
\noindent
{\bf Acknowledgments.} 
The author wishes to thank Louis Jeanjean for providing some very useful suggestions
and Daniele Bartolucci for pointing out some bibliographic references.
\vskip32pt

\bigskip
\bigskip

\end{document}